\documentclass[11pt,reqno]{amsart}
\usepackage[usenames]{color}
\usepackage{amsmath,verbatim,graphicx,epstopdf,enumerate}
\setlength{\topmargin}{-.25in}
\setlength{\textheight}{9in}
\setlength{\textwidth}{7in}
\setlength{\headheight}{26pt}
\setlength{\headsep}{2pt}
\setlength{\oddsidemargin}{-0.25in}
\setlength{\evensidemargin}{-0.25in}

\newcommand{\D}{\mathrm{d}}

\newcommand{\lb}{\left(}

\newcommand{\rb}{\right)}
\newcommand{\PD}{\partial}

\newcommand{\Beq}{\begin{equation}}
	\newcommand{\Eeq}{\end{equation}}
\newcommand{\beq}{\begin{equation*}}
	\newcommand{\eeq}{\end{equation*}}
\newcommand{\bal}{\begin{align}}
	\newcommand{\eal}{\end{align}}

\newcommand{\bp}{\begin{prob}}
	\newcommand{\ep}{\end{prob}}
\newcommand{\bpr}{\begin{proof}}
	\newcommand{\epr}{\end{proof}}



\newcommand{\bel}[1]{\begin{equation}\label{#1}}
	\newcommand{\ee}{\end{equation}}

\newtheorem{theorem}{Theorem}[section]

\newtheorem{lemma}[theorem]{Lemma}

\theoremstyle{definition}
\newtheorem{definition}[theorem]{Definition}
\newtheorem{remark}[theorem]{Remark}

\title[Inverse problem for non-linear hyperbolic PDE]{Inverse Boundary Value Problem for a Non-linear Hyperbolic Partial Differential Equations}
\author[Nakamura and Vashisth]{Gen Nakamura$^{\dagger}$ and Manmohan Vashisth$^{\ddagger}$}
\address{$^{\dagger}$Department of Mathematics, Hokkaido University, Sapporo 060-0810, Japan.
\newline\indent E-mail:{\tt \ nakamuragenn@gmail.com}\vspace{2mm}
\newline
	{$^{\ddagger}$Beijing Computational Science Research Center, Beijing 100193, China.
	\newline
	\indent E-mail:{\tt\  mvashisth@csrc.ac.cn}}}

\begin{document}
  
  \maketitle
 	\begin{abstract}
 	In this article we are concerned with an inverse boundary value problem for a non-linear wave equation of divergence form with space dimension $n\geq 3$. In particular the so called the interior determination problem. This non-linear wave equation has
 	a trivial solution, i.e. zero solution. By linearizing this equation at the trivial solution, we have the usual linear isotropic wave equation with the speed $\sqrt{\gamma(x)}$ at each point $x$ in a given spacial domain. For any small solution $u=u(t,x)$ of this non-linear equation, we have the linear isotropic wave equation perturbed by a divergence with respect to $x$ of a vector whose components are quadratics with respect to $\nabla_x u(t,x)$ by ignoring the terms with smallness $O(|\nabla_x u(t,x)|^3)$.  We will show that we can uniquely determine $\gamma(x)$ and the coefficients of these quadratics by many boundary measurements at the boundary of the spacial domain over finite time interval. More precisely the boundary measurements are given as the so-called the hyperbolic Dirichlet to Neumann map.
 \end{abstract}
 Keywords. Inverse boundary value problems, Nonlinear Wave equations.\\
 2010 Mathematics Subject Classification. 35L70, 35L20, 35R30.
 \section{Introduction}
 \setcounter{equation}{0}
 \renewcommand{\theequation}{1.\arabic{equation}}
  Let $\Omega\subset\mathbb{R}^{n}$\,($n\geq 3$) be a bounded domain with smooth boundary $\partial\Omega$. For $T>0$, let $Q_T:=(0,T)\times\Omega$ and denote its lateral boundary by $\partial Q_T:=[0,T)\times\partial\Omega$, and also denote $[\partial Q_T]:=[0,T]\times\partial\Omega$. We will simply write $Q=Q_T,\,\partial Q=\partial Q_T$ for $T=\infty$. 
 
 Consider the following initial boundary value problem (IBVP):
 \begin{align}\label{equation of interest}
 	\begin{aligned}
 	\begin{cases}
 	&\partial_{t}^{2}u(t,x)-\nabla_{x}\cdot\vec{C}(x,\nabla_{x}u(t,x))=0, \  (t,x)\in Q_T, \\
 	&u(0,x)=\epsilon \phi_{0}(x),\,\, \partial_{t}u(0,x)=\epsilon \phi_{1}(x),  \quad  x\in\Omega,\\
 	&u(t,x)=\epsilon f(t,x), \quad (t,x)\in\partial Q_T,
 	\end{cases}
 	\end{aligned}
 	\end{align}
 where $\nabla_x:=(\partial_1,\cdots,\partial_n),\,\,\partial_j=\partial_{x_j}$ for $x=(x_1,\cdots,x_n)$. 
 Here 
 $\vec{C}(x,q)$ is given by 
 \begin{align}\label{definition of vector C}
 \begin{aligned}
 \vec{C}(x,q):= \gamma(x)q+\vec{P}(x,q)+\vec{R}(x,q)
 \end{aligned}
 \end{align}
 for vector $q:=(q_{1},\cdots,q_{n})\in\mathbb{R}^{n}$,
 $C^\infty(\overline\Omega)\ni\gamma(x)\geq C>0$ for some constant $C$, 
 \begin{align}\label{definition of P(x,q)}
 \vec{P}(x,q):=\Big(\sum_{k,l=1}^{n}c_{kl}^{1}q_{k}q_{l},\sum_{k,l=1}^{n}c_{kl}^{2}q_{k}q_{l},\sum_{k,l=1}^{n}c_{kl}^{3}q_{k}q_{l},\cdots,\sum_{k,l=1}^{n}c_{kl}^{n}q_{k}q_{l}\Big),
 \end{align}
and for simplicity we assume that each $c_{kl}^{j}\in C_{0}^\infty(\Omega)$ and $\vec{R}(x,q)\in C^\infty(\overline{\Omega}\times H)$ with $H:=\{q\in {\mathbb R}^n:\,|q|\le h\}$ for some
 constant $h>0$  satisfying the following estimate: there exists a constant $C>0$ such that
 \begin{equation}\label{estimate of R}
 |\partial_q^\alpha\partial_x^\beta\vec{R}(x,q)|\le C|q|^{3-|\alpha|}\,\,\text{for mulit-indices $\alpha,\,\beta,\,|\alpha|\le 3$}.
 \end{equation}
 
 Denote by $B^\infty(\partial Q_T)$ the completion of $C_0^\infty(\partial Q_T)$ with respect to the topology of the Fr\'echet space $C^\infty([\partial Q_T])$ with metric $d_{\partial}(\cdot,\cdot)$.
 Let $m\ge [n/2]+3$ with the largest integer  $[n/2]$ not exceeding $n/2$ and $B_M:=\{(\phi_0,\phi_1,f)\in C^\infty(\overline\Omega)^2\times B^\infty(\partial Q_T):\,d_{\partial}(0,f)+\sum_{j=1}^2 d(0,\phi_j)\le M\}$ with a metric $d(\cdot,\cdot)$ in a Fr\'echet space $C^\infty(\overline\Omega)$ and a fixed constant $M>0$, then there exists $\epsilon_0=\epsilon_0(h, T,m, M)>0$ such that
 \eqref{equation of interest} has a unique solution $u\in X_m:=\cap_{j=0}^m C^j([0,T]; H^{m-j}(\Omega))$ for any $(\phi_0,\phi_1,f)\in B_M$ and $0<\epsilon<\epsilon_0$. We refer this by the {\sl unique solvability} of \eqref{equation of interest}. Note that in particular, we can take $B_M$ as a set just consisting of $(\phi_0,\phi_1,f)\in C^\infty(\overline\Omega)^2\times C_0^\infty(\partial Q_T)$. Following \cite{Dafermos,NW} in Appendix A we will provide some argument about this together with the $\epsilon$-expansion of solutions given in Section 2. Concerning the compatibility condition for
 the initial data and boundary data, it is related to how we construct $\epsilon$-expansion. Remark \ref{two steps} gives some information about how we choose the initial data compatible to the boundary data.

 Based on this, define the Dirichlet to Neumann (DN) map $\widetilde{\Lambda}_{\vec{C}}^T$ by
 \begin{align}\label{definition of NtD map}
 	\widetilde{\Lambda}_{\vec{C}}^T(\epsilon \phi_{0},\epsilon \phi_{1},\epsilon f)=\nu(x)\cdot\vec{C}(x,\nabla_{x}u^{f})|_{\partial Q_T},\,\,(\phi_0,\phi_1,f)\in B_M,\,\,0<\epsilon<\epsilon_0
 	\end{align}
 where $u^{f}(t,x)$ is the solution to $\eqref{equation of interest}$ and $\nu(x)$ is the outer unit normal vector of $\partial\Omega$ at $x\in\partial\Omega$ directed into
 the exterior of $\Omega$. We simply denote by $\Lambda_{\overrightarrow C}^T$  the restriction $\widetilde{\Lambda}_{\vec{C}}^T\big|_{B_M^0}$, where $B_M^0:=\{(0,0,f)\in B_M :\,d_\partial(0,f)\le M\}$.
 
 The inverse problem we are going to consider is the uniqueness of identifying $\gamma=\gamma(x)$ and $\vec{P}=\vec{P}(x,q)$ from the DN map 
 $\widetilde{\Lambda}^T_{\vec{C}}$.  More precisely it is to show that if the DN maps $\widetilde{\Lambda}_{\vec{C_i}}^T,\,i=1,2$ given by $\eqref{definition of NtD map}$ for $\vec{C}=\vec{C}_i,\,i=1,2$ are the same, then $(\gamma_i,\,\vec{P_i}),\,i=1,2$ are the same, where $(\gamma_i, \vec{P_i}),\,i=1,2$ are $(\gamma, \vec{P})$ associated to $\vec{C_i},\,i=1,2$. It should be remarked here that to avoid any measurement inside $\Omega$, we will take some known $\phi_0,\,\phi_1$. We also remark that since we are assuming each $c_{kl}^{j}\in C_{0}^\infty(\Omega)$, this inverse problem can be classified as the so called interior determination. The versus of this terminology is the so called determination at the boundary. Usually for the unique identification of the coefficients of the principal part of a equation by many boundary measurements, the identification is done in two steps. The first step is the determination at the boundary and the second step is the interior determination.
 
 The non-linear wave equation of the form \eqref{equation of interest} arises as a model equation of a vibrating string with elasticity coefficient depending on strain and a model equation describing the anti-plane deformation of a uniformly thin piezoelectric material for the one spacial dimension (\cite{NWK}), and as a model equation for non-linear Love waves for the two spacial dimension (\cite{Rus}).
 
 There are several works on inverse problems for non-linear wave equations. For example, 
 Denisov \cite{Denisov quasilinear}, Grasselli \cite{Grasselli} and Lorenzi-Paparoni \cite{Lorenzi Paparoni} considered the inverse problems related to non-linear wave equations, but non-linearity in their works is in lower order terms. Under the same set up as our inverse problem except the space dimension, Nakamura-Watanabe in \cite{NW} identified $(\gamma, \vec{P})$ by giving a reconstruction formula in one space dimension which also gives uniqueness. We are going to prove the uniqueness for our inverse problem when the space dimension $n\geq 3$.
 The physical meaning of our inverse problem can be considered as a problem to identify especially the higher order tensors in non-linear elasticity for its simplified model equation. In a smaller scale the higher order tensors become important.There is a recent uniqueness result \cite{hoop} on the Laudau-Lifshitz nonlinear eastic model.
 
 We will also mention about some related works for elliptic and parabolic equations. For elliptic equations, Kang-Nakamura in \cite{KN} studied the uniqueness for determining the non-linearity in conductivity equation. Our result can be viewed as a generalisation of \cite{KN} for non-linear wave equation. There are other works related to non-linear elliptic PDE, we refer to \cite{Hervas and Sun, Isakov Uniqueness,Isakov and Nachman,Nakamura Sun,Sun Uhlmann,Sun Conjecture,Sun Semilinear}. For parabolic equations, we refer to \cite{COY,Isakov Uniqueness,Klibanov}.
 
 \medskip
 In order to state our main result, we need to define the filling time $T^*$. 
 \begin{definition}\label{filling time}
 	Let $E^T$ be the maximal subdomain of $\Omega$ such that any solution $v=v(t,x)$ of $\partial_t^2 v-\nabla_x\cdot(\gamma\nabla_x v)=0$ in $Q_T$ will become zero in this subdomain at $t=2T$ if the Cauchy data of $v$ on $\partial Q_{2T}$ are zero. We call $E^T$ the influence domain. Then define the filling time $T^*$ by $T^*=\text{\rm inf}\{T>0:\,E^T=\Omega\}$. 
 \end{definition}
 
 By the Holmgren-John-Tataru unique continuation property of solutions of the above equation for $v$ in Definition \ref{filling time}, there exists a finite filling time $T^*$ (see \cite{KKL} and the references there in for further details).
 Based on this we have the following main theorem.
 \begin{theorem}\label{Uniqueness theorem}
 	For $i=1,2$, let 
 	\begin{align*}
 	\vec{P}^{(i)}(x,q):=\Big(\sum_{k,l=1}^{n}c_{kl}^{1(i)}q_{k}q_{l},\sum_{k,l=1}^{n}c_{kl}^{2(i)}q_{k}q_{l},\sum_{k,l=1}^{n}c_{kl}^{3(i)}q_{k}q_{l},\cdots,\sum_{k,l=1}^{n}c_{kl}^{n(i)}q_{k}q_{l}\Big)
 	\end{align*}
 	and  $\vec{C}^{(i)}(x,q)=\gamma_{i}(x)p+\vec{P}^{(i)}(x,q)+\vec{R}^{(i)}(x,q)$ with $\gamma_i,\,\vec{P}^{(i)}\ \text{and} \ \vec{R}^{(i)},\,i=1,2$ satisfying the same conditions as for $\gamma,\,\vec{P}\ \text{and}\ \vec{R}$. Further let $u^{(i)},\,i=1,2$ be the solutions to the following IBVP:
 	\begin{align}\label{equation for ui}
 	\begin{aligned}
 	\begin{cases}
 	&\partial_{t}^{2}u^{(i)}(t,x)-\nabla_{x}\cdot\vec{C}^{(i)}(x,\nabla_{x}u^{(i)}(t,x))=0, \   (t,x)\in Q_T, \\
 	&u^{(i)}(0,x)=\epsilon\phi_{0}(x),\ \partial_{t}u^{(i)}(0,x)=\epsilon \phi_{1}(x),\ \  x\in\Omega,\\
 	&u^{(i)}(t,x)=\epsilon f(t,x), \quad (t,x)\in\partial Q_T
 	\end{cases}
 	\end{aligned}
 	\end{align}
 	with any $0<\epsilon<\epsilon_0$.
 	Assume $T>2T^*$ and let $\widetilde{\Lambda}_{\vec{C}^{(1)}}^T$ and $\widetilde{\Lambda}_{\vec{C}^{(2)}}^T$  be the DN maps as defined in $\eqref{definition of NtD map}$  corresponding to $u^{(1)}$ and $u^{(2)}$ respectively. Assume that 
 	\begin{align}\label{equality of NtD map}
 	\widetilde{\Lambda}_{\vec{C}^{(1)}}^T(\epsilon\phi_0,\epsilon\phi_1,\epsilon f)=\widetilde{\Lambda}_{\vec{C}^{(2)}}^T(\epsilon\phi_0,\epsilon\phi_1, \epsilon f),\,\,(\phi_0,\phi_1,f)\in B_M,\,\,0<\epsilon<\epsilon_0.
 	\end{align}
 	Then we have
 	\begin{align*}
 	\gamma_{1}(x)=\gamma_{2}(x),\,\, c_{kl}^{j(1)}(x)=c_{kl}^{j(2)}(x),\,\,x\in\Omega,\,\,1\leq j,k,l\leq n.
 	\end{align*}
 	
 \end{theorem}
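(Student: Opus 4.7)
The plan is to perform the formal $\epsilon$-expansion $u^{(i)}=\epsilon u_1^{(i)}+\epsilon^2 u_2^{(i)}+O(\epsilon^3)$ of the solutions to \eqref{equation for ui}, insert it into the DN map \eqref{definition of NtD map}, and compare the two sides of \eqref{equality of NtD map} order by order in $\epsilon$. At order $\epsilon$ the function $u_1^{(i)}$ satisfies the homogeneous linear wave equation $\partial_t^2 u_1^{(i)}-\nabla_x\cdot(\gamma_i\nabla_x u_1^{(i)})=0$ in $Q_T$ with Cauchy data $(\phi_0,\phi_1)$ and Dirichlet data $f$; at order $\epsilon^2$ the function $u_2^{(i)}$ solves the same linear operator with zero Cauchy and Dirichlet data and source $\nabla_x\cdot\vec P^{(i)}(x,\nabla_x u_1^{(i)})$, the cubic remainder $\vec R^{(i)}$ contributing only at $O(\epsilon^3)$ by \eqref{estimate of R}. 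Because $c_{kl}^{j(i)}\in C_0^\infty(\Omega)$, the vector $\vec P^{(i)}$ vanishes near $\partial\Omega$, so the order-$\epsilon^2$ contribution to $\nu\cdot\vec C^{(i)}(x,\nabla_x u^{(i)})|_{\partial Q_T}$ reduces to $\nu\cdot\gamma_i\nabla_x u_2^{(i)}|_{\partial Q_T}$.

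Restricting \eqref{equality of NtD map} to $(\phi_0,\phi_1)=(0,0)$ and reading off its order-$\epsilon$ coefficient yields equality of the two linear hyperbolic DN maps attached to $\partial_t^2-\nabla_x\cdot(\gamma_i\nabla_x)$ on $Q_T$. By the boundary control method of Belishev and Kachalov--Kurylev--Lassas \cite{KKL}, combined with the hypothesis $T>2T^*$, this forces $\gamma_1=\gamma_2=:\gamma$; consequently $u_1^{(1)}=u_1^{(2)}=:u_1$ for every admissible $(\phi_0,\phi_1,f)\in B_M$.

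Setting $w:=u_2^{(1)}-u_2^{(2)}$, the order-$\epsilon^2$ coefficient of \eqref{equality of NtD map} then gives $\nu\cdot\gamma\nabla_x w|_{\partial Q_T}=0$, while $w$ automatically has zero Cauchy and Dirichlet data on $\{t=0\}$ and $\partial Q_T$ respectively, and satisfies
\begin{equation*}
\partial_t^2 w-\nabla_x\cdot(\gamma\nabla_x w)=\nabla_x\cdot\bigl[(\vec P^{(1)}-\vec P^{(2)})(x,\nabla_x u_1)\bigr]\quad\text{in }Q_T.
\end{equation*}
Pairing this equation with an arbitrary smooth $v$ that solves $\partial_t^2 v-\nabla_x\cdot(\gamma\nabla_x v)=0$ in $Q_T$ with $v(T,\cdot)=\partial_t v(T,\cdot)=0$, integrating by parts to transfer the operator onto $v$, and then integrating by parts once more in $x$ on the right-hand side (using $c_{kl}^j\in C_0^\infty(\Omega)$ to annihilate the boundary contribution on $\partial\Omega$), every boundary term involving $w$ or $v$ vanishes, leaving
\begin{equation*}
\int_{Q_T}\sum_{j,k,l=1}^n d_{kl}^j(x)\,\partial_k u_1\,\partial_l u_1\,\partial_j v\,\D x\,\D t=0,\qquad d_{kl}^j:=c_{kl}^{j(1)}-c_{kl}^{j(2)},
\end{equation*}
which we may assume symmetric in $(k,l)$. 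Polarizing in $u_1$ (replace $u_1$ by $u_1+s\widetilde u_1$ and differentiate at $s=0$) upgrades this to the trilinear identity
\begin{equation*}
\int_{Q_T}\sum_{j,k,l=1}^n d_{kl}^j(x)\,\partial_k u_1\,\partial_l\widetilde u_1\,\partial_j v\,\D x\,\D t=0,
\end{equation*}
valid for every triple $(u_1,\widetilde u_1,v)$ of smooth solutions of the common linear wave operator in $Q_T$, with $v$ vanishing to first order at $t=T$.

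The main obstacle, and the place where the dimension assumption $n\geq 3$ is used essentially, is to conclude $d_{kl}^j\equiv 0$ in $\Omega$ from this trilinear identity. The plan is a three-wave interaction argument via Gaussian-beam / WKB solutions concentrated along null bicharacteristics of the principal symbol $\tau^2-\gamma(x)|\xi|^2$: at a fixed interior point $(t_0,x_0)\in Q_T$ and any three non-zero spatial covectors $\xi^{(1)},\xi^{(2)},\xi^{(3)}\in\rr^n$, one constructs three beams with large parameter $\lambda$ whose bicharacteristic tubes meet transversally at $(t_0,x_0)$ with prescribed spatial covectors there, the beam playing the role of $v$ being built backward from $t=T$ with vanishing terminal Cauchy data. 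Stationary-phase asymptotics in $\lambda$ then localise the trilinear identity at $(t_0,x_0)$ and extract, up to an explicit non-vanishing geometric factor, the symbol
\begin{equation*}
\sum_{j,k,l=1}^n d_{kl}^j(x_0)\,\xi^{(1)}_k\,\xi^{(2)}_l\,\xi^{(3)}_j.
\end{equation*}
Varying $\xi^{(1)},\xi^{(2)},\xi^{(3)}$ independently in $\rr^n$ and using the $(k,l)$-symmetry of $d_{kl}^j$ then forces $d_{kl}^j(x_0)=0$ for every $j,k,l$ and every $x_0\in\Omega$, completing the proof. The technical difficulties concentrated in this step are the careful construction of Gaussian-beam solutions admissible as test functions, in particular the terminal vanishing of $v$, and the control of the remainder in the stationary-phase expansion so that the leading-order symbol indeed vanishes.
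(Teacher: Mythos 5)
Your first two stages are sound and agree with the paper: the $\epsilon$-expansion, the recovery of $\gamma_1=\gamma_2$ from the order-$\epsilon$ DN map via boundary control with $T>2T^*$, and the derivation (using $c^j_{kl}\in C_0^\infty(\Omega)$, the vanishing Cauchy/Dirichlet/Neumann data of $w=u_2^{(1)}-u_2^{(2)}$, and polarization in $f$) of the trilinear identity $\int_{Q_T}\sum_{j,k,l}d^j_{kl}\,\partial_k u_1\,\partial_l\widetilde u_1\,\partial_j v\,\D x\,\D t=0$ are all correct; your route to this identity is in fact more direct than the paper's (which goes through delayed controls $Y_sg$, a time integration producing $u_2^{(-1)}$, and a Laplace transform in $t$). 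The genuine gap is in the final and decisive step. The three-wave interaction you propose cannot produce the symbol $\sum_{j,k,l}d^j_{kl}\xi^{(1)}_k\xi^{(2)}_l\xi^{(3)}_j$ for \emph{independently} chosen spatial covectors. A nonzero leading-order stationary-phase contribution from the product of three oscillatory solutions of $\partial_t^2-\gamma\Delta$ requires the resonance condition $\kappa_1+\kappa_2+\kappa_3=0$ for characteristic covectors $\kappa_i=(\tau_i,\xi^{(i)})$ with $\tau_i^2=\gamma(x_0)|\xi^{(i)}|^2$; expanding $0=(\tau_1+\tau_2)^2-\gamma|\xi^{(1)}+\xi^{(2)}|^2$ gives $\tau_1\tau_2=\gamma\,\xi^{(1)}\cdot\xi^{(2)}$, hence $|\xi^{(1)}\cdot\xi^{(2)}|=|\xi^{(1)}||\xi^{(2)}|$, so the three spatial covectors are forced to be collinear. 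For non-resonant triples the integral is $O(\lambda^{-\infty})$ and yields no information; for resonant (collinear) triples you only learn $\sum_{j,k,l}d^j_{kl}\xi_j\xi_k\xi_l=0$ for all $\xi\in\mathbb{R}^n$, i.e.\ the vanishing of the \emph{totally symmetric} part of $d^j_{kl}$. Since $d^j_{kl}$ is only symmetric in $(k,l)$, this does not force $d^j_{kl}=0$: e.g.\ $d^1_{22}=2$, $d^2_{12}=d^2_{21}=-1$, all other entries zero, is annihilated by total symmetrization but is a perfectly admissible nonzero difference of coefficient tensors.

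This is precisely the obstruction the paper's construction is designed to circumvent. After the Laplace transform in $t$ the identity lives on the \emph{elliptic} equation $\tau^2\widehat u-\nabla_x\cdot(\gamma\nabla_x\widehat u)=0$, for which Sylvester--Uhlmann complex geometric optics solutions $e^{\zeta\cdot x}\gamma^{-1/2}(1+R)$ with $\zeta\cdot\zeta=0$ exist; choosing $\zeta_1,\zeta_2$ as in \eqref{definition of zeta i} one has $\zeta_1+\zeta_2=\sqrt{-1}\,a$ with $\zeta_1/s\to\rho$, $\zeta_2/s\to-\rho$ for a complex isotropic $\rho=\eta+\sqrt{-1}\,\xi$, so the ``resonance'' constraint becomes $\zeta_1+\zeta_2$ bounded while the directions remain free on the complex isotropic cone. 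Vanishing of $\sum_{k,l}\rho_k\rho_l d^j_{kl}$ on that cone first gives $d^j_{kl}=0$ for $k\neq l$ and $d^j_{kk}=d^j_{ll}$, and a second CGO argument (the analogue of \eqref{Integral identity for cjkk}--\eqref{prefinal equation}, together with Lemma \ref{Linearly independent solution to conductivity equation} to decouple the index $j$) removes the residual trace part. Without passing to complex frequencies (or supplying some other mechanism that sees the non-totally-symmetric part of $d^j_{kl}$), your argument cannot conclude, so as written the proof of $c^{j(1)}_{kl}=c^{j(2)}_{kl}$ is incomplete.
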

 
 \begin{remark}\label{two steps}${}$
 The proof will be done in two steps. Namely
 we first show that from $\Lambda_{\vec{C}^{(1)}}^T(\epsilon f)=\Lambda_{\vec{C}^{(2)}}^T(\epsilon f)$, $f\in B_M^0$, we can have $\gamma=\gamma_1=\gamma_2$ and $\gamma$ can be reconstructed from the linearization $\Lambda_\gamma^T$ of these two given DN maps which end up with this same one. Next from \eqref{equality of NtD map} with some initial data which can be generated by $\gamma$ and the boundary data, we prove the unique identification of $c_{kl}^j$'s. 
 \end{remark}
 The most difficult part of proving Theorem \ref{Uniqueness theorem} is showing the uniqueness of identifying the quadratic nonlinear part $\vec{P}(x,q)$. The key ingredients for showing this are to use the control with delay in time (see \eqref{delay control}) and the special polarization for the difference of the quadratic nonlinear part with integration with respect to the delay time (see \eqref{equation for u2 after even extension}) coming from two $\vec{C}^{(i)}(x, q),\,i=1,2$ so that via the Laplace tranform with respect to $t$, we can relate the problem of identifying the quadratic part to that for a nonlinear elliptic equation. The reduced problem is almost the same as the one considered in \cite{KN}. 
 
 \medskip
 The rest of this paper is organized as follows. In Section 2, we will introduce the $\epsilon$-expansion of the IBVP to analyze the hyperbolic DN map. As a consequence, we will show that the hyperbolic DN map determines the hyperbolic DN map associated with the equation $\partial_t^2 v-\nabla_x\cdot(\gamma\nabla_x v)=0$ in $(0,T)\times\Omega$. This immediately implies the uniqueness of identifying $\gamma$. Section 3 is devoted to proving the uniqueness of identifying the quadratic non-linear part of $\vec{P}(x,q)$. In Appendix A---D, we will give some arguments for the unique solvability of \eqref{equation of interest}, the justification of $\epsilon$-expansion, the necessary tools and some estimate.
 
 \section{$\epsilon$-expansion of the solution to the IBVP}
 \setcounter{equation}{0}
 \renewcommand{\theequation}{2.\arabic{equation}}
 To prove the theorem, we will use  the $\epsilon$-expansion of the solution $u^{(i)f}$ to Equation  $\eqref{equation for ui}$ which is given by 
 \begin{align}\label{epsilon approximation of solution of u i}
 u^{(i)f}(t,x)=\epsilon u^{(i)f}_{1}(t,x)+\epsilon^{2}u^{(i)f}_{2}(t,x)+O(\epsilon^3).     \end{align}
 Here $O(\epsilon^{3})$ we mean the following:
 \begin{align*}
     \begin{aligned}
      w(t, x)=O(\epsilon^{3}) \mbox{ means that} \  \sup_{0\leq t\leq T}\sum_{k=0}^{m}\lVert w^{(k)}(t,.)\rVert_{m-k}^{2}=O(\epsilon^{3}),\\
\mbox{where} \ \lVert.\rVert_{k} \  \mbox{is the norm of the usual Sobolev space}\,  H^{k}(\Omega).
    \end{aligned}
 \end{align*}
 We will provide some argument on the justification of this expansion in Appendix A. 
 
 By the straight forward calculation, we have the followings:
 \begin{align*}
 \begin{aligned}
 \partial_{t}^{2}u^{(i)f}=\epsilon\partial_{t}^{2}u^{(i)f}_{1}(t,x)+\epsilon^{2}\partial_{t}^{2}u^{(i)f}_{2}(t,x)+O(\epsilon^3),\qquad\qquad\qquad\qquad\qquad\qquad\qquad\qquad\\
 \nabla_{x}u^{(i)f}=\epsilon \nabla_{x}u^{(i)f}_{1}(t,x)+\epsilon^{2}\nabla_{x}u^{(i)f}_{2}(t,x)+O(\epsilon^3),\qquad\qquad\qquad\qquad\qquad\qquad\qquad\qquad\\
 \vec{C}^{(i)}(x,\nabla_{x}u^{(i)f})=\gamma_{i}(x)\nabla_{x}u^{(i)f}(t,x)+\qquad\qquad\qquad\qquad\qquad\qquad\qquad\qquad\qquad\qquad\qquad\qquad\\
 \Big(\sum_{k,l=1}^{n}c_{kl}^{1(i)}\partial_{k}u^{(i)f}\partial_{l}u^{(i)f},\sum_{k,l=1}^{n}c_{kl}^{2(i)}\partial_{k}u^{(i)f}\partial_{l}u^{(i)f},\cdots,\sum_{k,l=1}^{n}c_{kl}^{n(i)}\partial_{k}u^{(i)f}\partial_{l}u^{(i)f}\Big)+o(\epsilon^{3})\qquad\quad\\
 =\epsilon\gamma_{i}(x)\nabla_{x}u^{(i)f}_{1}+\epsilon^{2}\gamma_{i}(x)\nabla_{x}u^{(i)f}_{2}+\epsilon^{2}\Big(\sum_{k,l=1}^{n}c_{kl}^{j(i)}\partial_{k}u^{(i)f}_{1}\partial_{l}u^{(i)f}_{1}\Big)_{1\leq j\leq n}+O(\epsilon^{3}),\qquad\qquad\\
 \nabla_{x}\cdot\vec{C}^{(i)}(x,\nabla_{x}u^{(i)f})= \epsilon\nabla_{x}\cdot(\gamma_{i}(x)\nabla_{x}u^{(i)f}_{1})+\epsilon^{2}\nabla_{x}\cdot(\gamma_{i}(x)\nabla_{x}u^{(i)f}_{2})\qquad\qquad\qquad\qquad\qquad\\
 +\epsilon^{2}\sum_{j=1}^{n}\partial_{j}\Big(\sum_{k,l=1}^{n}c_{kl}^{j(i)}\partial_{k}u^{(i)f}_{1}\partial_{l}u^{(i)f}_{1}\Big)+O(\epsilon^{3}).\qquad\qquad\qquad
 \end{aligned}
 \end{align*}
 Substitute $\eqref{epsilon approximation of solution of u i}$ into $\eqref{equation for ui}$, and arrange the terms into ascending order of power of $\epsilon$ by using the above calculations. Then setting the coefficients of $\epsilon$ and $\epsilon^2$ equal zero, we have the following equations for $u_{1}^{(i)}=u_{1}^{(i)f}$ and $u_{2}^{(i)}=u_{2}^{(i)f}$: 
 \begin{align}\label{equation for u1 i}
 	\begin{aligned}
 	\begin{cases}
 	&\partial_{t}^{2}u^{(i)}_{1}(t,x)-\nabla_{x}\cdot(\gamma_{i}(x)\nabla_{x}u^{(i)}_{1}(t,x))=0, \ \   (t,x)\in Q_{T}\\
 	&u^{(i)}_{1}(0,x)=0,\ \partial_{t}u^{(i)}_{1}(0,x)=0, \ \  x\in\Omega\\
 	&u_{1}^{(i)}(t,x)=f(t,x),\ \ (t,x)\in\partial Q_{T},
 	\end{cases}
 	\end{aligned}
 	\end{align}
 	\begin{align}\label{equation for u2 i}
 	\begin{aligned}
 	\begin{cases}
 	&\partial_{t}^{2}u^{(i)}_{2}(t,x)-\nabla_{x}\cdot(\gamma_{i}(x)\nabla_{x}u^{(i)}_{2}(t,x))=\nabla_{x}\cdot \vec{P}^{(i)}(x,\nabla_{x}u^{(i)f}_{1}), \  (t,x)\in Q_{T}\\
 	&u^{(i)}_{2}(0,x)=\partial_{t}u^{(i)}_{2}(0,x)=0,\ \   x\in\Omega\\
 	&u_{2}^{(i)}(t,x)=0,\quad   \ (t,x)\in \partial Q_{T}.
 	\end{cases}
 	\end{aligned}
 	\end{align}
 For the well-posedness of these initial boundary value problem see for example Theorem 2.45 of \cite{KKL}.

 By using the $\epsilon$-expansion $\eqref{epsilon approximation of solution of u i}$ of solution to equation $\eqref{equation for ui}$, we have the $\epsilon$-expansion of the DN map: 
 \begin{align}\label{NtD map in epsilon aexpansion for u i}
 	\begin{aligned}
 	\Lambda_{\vec{C}^{(i)}}^T(\epsilon f)&=\epsilon\left(\gamma_{i}(x)\partial_{\nu}u^{(i)}_{1}(t,x)\right)\Big|_{\partial Q_{T}}\\
 	&\ \ \ \ \ +\epsilon^{2}\left(\gamma_{i}(x)\partial_{\nu}u^{(i)}_{2}(t,x)+\nu(x)\cdot\vec{P}^{(i)}(x,\nabla_{x}u_{1}^{(i)f})\right)\Big|_{\partial Q_{T}}+O(\epsilon^{3})\\
 	&=\epsilon g_{1}^{(i)}+\epsilon^{2}g_{2}^{(i)}+O(\epsilon^{3}).
 	\end{aligned}
 	\end{align}
 This gives us 
 \begin{align}\label{NtD map for u1 i}
 	\Lambda^{T}_{\gamma_{i}}(f)= \gamma_{i}(x)\partial_{\nu}u^{(i)f}_{1}|_{\partial Q_{T}}=g^{(i)}_{1}(t,x)|_{\partial Q_{T}},\,\, (t,x)\in\partial Q_{T}
 	\end{align}
 where each $\Lambda_{\gamma_{i}}^T$ is the DN map associated to the initial boundary value problem \eqref{equation for u1 i} defined by 
 \begin{equation}\label{linear ND map}
 	\Lambda^{T}_{\gamma_{i}}(f)=\gamma_{i}(x)\partial_{\nu}u^{(i)f}_{1}\big|_{\partial Q_T},\,\,f\in C_0^\infty(\partial Q_T).
 	\end{equation}
 
 Therefore we have shown the following implication:
 \begin{equation}\label{implication}
 \begin{aligned}
& \Lambda_{\vec{C}^{(1)}}^T(\epsilon f)=\Lambda_{\vec{C}^{(2)}}^T(\epsilon f),\,\,f\in B_M^0,\,\,0<\epsilon<\epsilon_0\\
& \Longrightarrow \Lambda_{\gamma_{1}}^T=\Lambda_{\gamma_{2}}^T.
 \end{aligned}
 \end{equation}
 
 \section{Proof for Theorem $\ref{Uniqueness theorem}$}
 \subsection{Proof of the uniqueness for $\gamma$}
 \setcounter{equation}{0}
 \renewcommand{\theequation}{3.\arabic{equation}}${}$
 \par
 By knowing each $\Lambda_{\vec{C}^{(i)}}^T(\epsilon f),\,i=1,2$ for any $f\in B_M^0$, $0<\epsilon<\epsilon_0$, we do know each $\Lambda_{\gamma_{i}}^T,\,i=1,2$ from \eqref{NtD map for u1 i}. Then, recalling $T>2T^*$, we can reconstruct each $\gamma_{i},\, i=1,2$ from $\Lambda_{\gamma_{i}}^T,\,i=1,2$ by the boundary control method (see \cite{Belishev and Kurylev}). By \eqref{implication}
 the reconstructed $\gamma_{i},\,i=1,2$ in $\Omega$ are the same. We denote this common $\gamma_{i},\,i=1,2$ by $\gamma$, i.e.
 \begin{equation}\label{gamma}
 \gamma=\gamma_1=\gamma_2\,\,\,\text{in\, $\Omega$}.
 \end{equation}
 Together with this and the given Dirichlet data $f$ is the same for $u_1^{(i)},\,i=1,2$, we do know
 $$
 u_1^{(1)}=u_1^{(2)}\,\,\text{in $Q_T$}.
 $$
 Due to the fact that $\gamma$ is independent of $t$, this implies
 $$
 u_1^{(1)}=u_1^{(2)}\,\,\text{in $Q$}.
 $$
 We denote this common solution by $u_1=u_1^f$, i.e.
 \begin{equation}\label{u_1}
 u_1=u_1^f= u_1^{(1)}=u_1^{(2)}\,\,\text{in $Q$}.
 \end{equation}

 \subsection{Proof of the uniqueness for $c_{kl}^{j}(x)$}
 \setcounter{equation}{0}
 \renewcommand{\theequation}{3.\arabic{equation}}${}$
 \par
 Through out this subsection we assume that the Dirichlet data $f(t,x),\,g(t,x)$ are of the following forms:
 \begin{equation}\label{form of f,g}
     f(t,x)=\chi(t)\widetilde f(x),\,\,g(t,x)=\chi(t)\widetilde g(x),\,\,\widetilde f,\,\widetilde g\in C^\infty(\partial\Omega),
 \end{equation}
 where $\chi\in C_0^\infty([0,T))$ is such that its Laplace transform $\widehat\chi(\tau)=\int_0^\infty e^{-\tau t}\chi(t)\,dt$ has the asymptotic 
 \begin{equation}\label{asymptotic of chi}
  \widehat\chi(\tau)=\tau^{-\mu} (1+O(\tau^{-1})),\,\,\tau\rightarrow\infty  
 \end{equation} with $\mu\in\mathbb{N}$ for $\tau_R:=\text{Re}\,\tau\ge1$. We abuse the notations to denote $c_{kl}^{j}(x):=c_{kl}^{j(1)}(x)-c_{kl}^{j(2)}(x)$ so that $\vec{P}(x,q):=\vec{P}^{(1)}(x,q)-\vec{P}^{(2)}(x,q)=\Big(\sum_{k,l=1}^{n}c^{j}_{kl}q_{k}q_{l}\Big)_{1\leq j\leq n}$ and $u^{f}_{2}(t,x)=u_{2}^{(1)f}(t,x)-u_{2}^{(2)f}(t,x)$. Then, from $\eqref{equation for u1 i}$ and $\eqref{equation for u2 i}$, $u_1=u^{f}_{1}(t,x)$ and $u_2=u^{f}_{2}(t,x)$ are the solutions to  the following initial boundary value problems:
 \begin{align}\label{equation for u 1 in uniqueness case}
 	\begin{aligned}
 	\begin{cases}
 	&\partial_{t}^{2}u_{1}(t,x)-\nabla_{x}\cdot(\gamma(x)\nabla_{x}u_{1}(t,x))=0, \   (t,x)\in Q,\\
 	&u_{1}(0,x)=0,\ \partial_{t}u_{1}(0,x)=0, \ x\in\Omega,\\
 	&u_{1}(t,x)=f(t,x),\  (t,x)\in\partial Q,
 	\end{cases}
 	\end{aligned}
 	\end{align}
 and
 \begin{align}\label{equation for u 2 uniqueness case}
 \begin{aligned}
 \begin{cases}
 &\partial_{t}^{2}u_{2}(t,x)-\nabla_{x}\cdot(\gamma(x)\nabla_{x}u_{2}(t,x))=\nabla_{x}\cdot(\vec{P}(x,\nabla_{x}u_{1}(t,x))), \ (t,x)\in Q,\\
 &u_{2}(0,x)=\partial_{t}u_{2}(0,x)=0,\   x\in\Omega,\\
 &u_{2}(t,x)=0,\ (t,x)\in \partial Q,
 \end{cases}
 \end{aligned}
 \end{align}
 respectively.
 We emphasize here that $u_1,\, u_2\in C^\infty ([0,\infty)\times\overline{\Omega})$ are the unique solutions to \eqref{equation for u 1 in uniqueness case} and \eqref{equation for u 2 uniqueness case}, respectively.\\
 
 From the equality of DN map
 in \eqref{NtD map in epsilon aexpansion for u i}, we have 
 \begin{align}\label{equality of Neumann data at epsilon2 level}
 [\gamma(x)\partial_{\nu}u^{f}_{2}(t,x)+\nu(x)\cdot\vec{P}(x,\nabla_{x}u^{f}_{1}(t,x))]\Big|_{\partial Q}=0.
 \end{align}
 Consider the even extension of $f(t,\cdot)$ with respect to $t$, so that the extended $f(t,\cdot)$ is defined on $\mathbb{R}\times\overline{\Omega}$. By abusing the notation, we denote this extended $f(t,\cdot)$ by the same notation. We also define $Y_{s}$ for any fixed $s\in\mathbb{R}$ by
 \begin{align}\label{delay control}
 Y_{s}f(t,.):=f(t-s,\cdot),\ \ t \in\mathbb{R}.
 \end{align}
 This is a control with delay time $s$.
 Then we have $u^{Y_{s}f}(t,x)=u^{f}(t-s,x)$. 
 
 For $f,\, g\in B_M^0$ consider the solutions $u_1^f$ and $u_1^g$ of \eqref{equation for ui} with $\gamma_i=\gamma$ and Dirichlet data $f$ and $g\in B_M^0$, respectively. Then by extending $f,\, g$ to even functions over $\mathbb{R}$, with respect to $t$, we can have the solutions $u_1^f$ and $u_1^g$ of \eqref{equation for u 1 in uniqueness case} with respective Dirichlet data $f$ and $g$ which are the even extension of the original $f, g$. Thus we can have $u_1^{f\pm Y_sg}(t,s)=u_1^f(t,x)\pm u_1^{Y_s g}(t,x)$. Now for the initial data $\epsilon\phi_0:=\epsilon (u_1^{f+Y_sg}-u_1^{f-Y_sg})\big|_{t=0}=2\epsilon Y_{s}g\big|_{t=0}\,,\,\,\epsilon\phi_1:=\{\partial_t(u_1^{f+Y_sg}-u_1^{f-Y_sg})\}\big|_{t=0}=2\epsilon(\partial_t u_1^{Y_s g})\big|_{t=0}$ and the Dirichlet data $(u_1^{f+Y_sg}-u_1^{f-Y_sg})\big|_{\partial\Omega}=2\epsilon Y_s g$, we can consider the initial boundary value problem \eqref{equation for ui} with $\gamma_i=\gamma$ and we can also consider its $\epsilon$-expansion.
Then, likewise $u_2^f$, it is easy to see that $u_2(t,x;s)=u_2^{f+Y_sg}(t,x)-u_2^{f-Y_sg}(t,x)$ satisfies the following initial boundary value problem:
 \begin{align}\label{equation for u2 after even extension}
 	\begin{aligned}
 	\begin{cases}
 	&\partial_{t}^{2}u_{2}(t,x;s)-\nabla_{x}\cdot(\gamma(x)\nabla_{x}u_{2}(t,x;s))=\\
 	&\qquad 2\sum_{j=1}^{n}\partial_{j}\Big(\sum_{k,l=1}^{n}c_{kl}^{j}(x)\partial_{k}u_{1}^{f}(t,x)\partial_{l}u_{1}^{g}(s-t,x)\Big)\\
 	&\qquad +2\sum_{j=1}^{n}\partial_{j}\Big(\sum_{k,l=1}^{n}c_{kl}^{j}(x)\partial_{k}u_{1}^{g}(s-t,x)\partial_{l}u_{1}^{f}(t,x)\Big),\ \ t,s\in\mathbb{R},\ x\in\Omega, \\
 	&u_{2}(0,x;s)=\partial_{t}u_{2}(0,x;s)=0,\ x\in\Omega,\ s\in\mathbb{R},\\
 	&u_{2}(t,x;s)=0,\ t,s\in\mathbb{R},\, x\in\partial\Omega.
 	\end{cases}
 	\end{aligned}
 	\end{align}
  Also from \eqref{equality of NtD map}, we have 
 \begin{align}\label{Neumann data of u_2}
 	\begin{aligned}
 	&\gamma(x)\partial_{\nu}u_{2}(t,x;s)+2\sum_{j=1}^{n}\nu_{j}(x)\Big(\sum_{k,l=1}^{n}c_{kl}^{j}(x)\partial_{k}u_{1}^{f}(t,x)\partial_{l}u_{1}^{g}(s-t,x)\Big)\\
 	&+2\sum_{j=1}^{n}\nu_{j}(x)\Big(\sum_{k,l=1}^{n}c_{kl}^{j}(x)\partial_{k}u_{1}^{g}(s-t,x)\partial_{l}u_{1}^{f}(t,x)\Big)=0.
 	\end{aligned}
 	\end{align}
 Now let us denote by
 \begin{align*}
 \begin{aligned}
 &Bv(t,x):= -\nabla_{x}\cdot(\gamma(x)\nabla_{x}v(t,x))\\
 &F(t,x;s):= 2\sum_{j=1}^{n}\partial_{j}\Big(\sum_{k,l=1}^{n}c_{kl}^{j}(x)\partial_{k}u_{1}^{f}(t,x)\partial_{l}u_{1}^{g}(s-t,x)\Big)\\
 &\ \ \ \ \ \ \ \qquad \qquad +2\sum_{j=1}^{n}\partial_{j}\Big(\sum_{k,l=1}^{n}c_{kl}^{j}(x)\partial_{k}u_{1}^{g}(s-t,x)\partial_{l}u_{1}^{f}(t,x)\Big).
 \end{aligned}
 \end{align*}
 We consider $B$ as a selfadjoint positive unbounded operator on $L^2(\Omega)$ with domain $D(B):=\{v\in L^2(\Omega): Bv\in L^2(\Omega),\,v\big|_{\partial\Omega}=0\}$. 
 Now using representation formula for the solution to \eqref{equation for u2 after even extension}, we have 
 \begin{align}\label{Representation formula for u2}
 \begin{aligned}
 &u_{2}(t,x;s)=B^{-1/2}\int\limits_{0}^{t}\sin\left\{(t-\sigma)B^{1/2}\right\}F(\sigma,x;s)d\sigma,\\
 &\partial_{s}u_{2}(t,x;s)=B^{-1/2}\int\limits_{0}^{t}\sin\left\{(t-\sigma)B^{1/2}\right\}\partial_{s}F(\sigma,x;s)d\sigma,\\
 &\partial_{s}^{2}u_{2}(t,x;s)=B^{-1/2}\int\limits_{0}^{t}\sin\left\{(t-\sigma)B^{1/2}\right\}\partial^{2}_{s}F(\sigma,x;s)d\sigma,\\
 &\partial_{t}u_{2}(t,x;s)=\int\limits_{0}^{t}\cos\left\{(t-\sigma)B^{1/2}\right\}F(\sigma,x;s)d\sigma,
 \end{aligned}
 \end{align}
 where $B^{\pm 1/2}$, $\sin\{(t-\sigma)B^{1/2}\}$ and $\cos\{(t-\sigma)B^{1/2}\}$ are defined using the spectral decomposition of $B$.
 
 Next let  
 \begin{align*}
 	u_2^{(-1)}(s,x)=\int\limits_{0}^{s}u_2(t,x;s)dt.
 	\end{align*}
 We want to derive an equation for $u_2^{(-1)}(s,x)$. To begin with, we have
 \begin{align*}
 \begin{aligned}
 &\partial_{s}u_{2}^{(-1)}(s,x)=u_{2}(s,x;s)+\int\limits_{0}^{s}\partial_{s}u_{2}(t,x;s) dt\\
 &\partial_{s}^{2}u_{2}^{(-1)}(s,x)=\partial_{t}u_{2}(s,x;s)+2\partial_{s}u_{2}(s,x;s)+\int\limits_{0}^{s}\partial_{s}^{2}u_{2}(t,x;s)dt\\
 &\nabla_{x}\cdot(\gamma(x)\nabla_{x}u_{2}^{(-1)}(s,x))=\int\limits_{0}^{s}\nabla_{x}\cdot(\gamma(x)\nabla_{x}u_{2}(t,x;s))dt.
 \end{aligned}
 \end{align*}
 Now using \eqref{Representation formula for u2}, we have 
 \begin{align*}
 \begin{aligned}
 &J_{1}(s,x):=\partial_{t}u_{2}(s,x;s)=\int\limits_{0}^{s}\cos\{(s-\sigma)B^{1/2}\}F(\sigma,x;s)d\sigma\\
 &\ = 2\sum_{j=1}^{n}\int\limits_{0}^{s}\cos\left\{(s-\sigma)B^{1/2}\right\}\partial_{j}\Bigg[\sum_{k,l=1}^{n}c_{kl}^{j}(x)\left\{\partial_{k}u_{1}^{f}(\sigma,x)\partial_{l}u_{1}^{g}(s-\sigma,x)+\partial_{k}u_{1}^{g}(s-\sigma,x)\partial_{l}u_{1}^{f}(\sigma,x)
 \right\}\Bigg]d\sigma.
 \end{aligned}
 \end{align*}
Define by  $J_{2}(s,x)$ as 
  \begin{align*}
     \begin{aligned}
 J_{2}(s,x)&:=2\partial_{s}u_{2}(s,x;s)=2B^{-1/2}\int\limits_{0}^{s}\sin\left\{(s-\sigma)B^{1/2}\right\}\partial_{s}F(\sigma,x;s)d\sigma\\
 &\ \ =2B^{-1/2}\int\limits_{0}^{s}\sin\left\{(s-\sigma)B^{1/2}\right\}\Bigg[2\sum_{j=1}^{n}\partial_{j}\Big(\sum_{k,l=1}^{n}c_{kl}^{j}(x)\partial_{k}u_{1}^{f}(\sigma,x)(\partial_{l}u_{1}^{g})'(s-\sigma,x)\Big)\\
 &\ \ \ \ \ \ \ \qquad \qquad +2\sum_{j=1}^{n}\partial_{j}\Big(\sum_{k,l=1}^{n}c_{kl}^{j}(x)(\partial_{k}u_{1}^{g})'(s-\sigma,x)\partial_{l}u_{1}^{f}(\sigma,x)\Big)\Bigg]d\sigma
 \end{aligned}
 \end{align*}
 where ${}'$ denote the single derivative with respect to $s$. We will denote by  ${}''$  double derivatives with respect to $s$. Now using the integration by parts, $J_{2}(s,x)$ becomes
 \begin{align*}
 \begin{aligned}
 J_{2}(s,x)&=2\int\limits_{0}^{s}\cos\left\{(s-\sigma)B^{1/2}\right\}\Bigg[2\sum_{j=1}^{n}\partial_{j}\Big(\sum_{k,l=1}^{n}c_{kl}^{j}(x)\partial_{k}u_{1}^{f}(\sigma,x)\partial_{l}u_{1}^{g}(s-\sigma,x)\Big)\\
 &\ \ \ \ \ \ \ \qquad \qquad +2\sum_{j=1}^{n}\partial_{j}\Big(\sum_{k,l=1}^{n}c_{kl}^{j}(x)\partial_{k}u_{1}^{g}(s-\sigma,x)\partial_{l}u_{1}^{f}(\sigma,x)\Big)\Bigg]d\sigma\\
 &\ \ -2B^{-1/2}\int\limits_{0}^{s}\sin\left\{(s-\sigma)B^{1/2}\right\}\Bigg[2\sum_{j=1}^{n}\partial_{j}\Big(\sum_{k,l=1}^{n}c_{kl}^{j}(x)(\partial_{k}u_{1}^{f})'(\sigma,x)\partial_{l}u_{1}^{g}(s-\sigma,x)\Big)\\
 &\ \ \ \ \ \ \ \qquad \qquad +2\sum_{j=1}^{n}\partial_{j}\Big(\sum_{k,l=1}^{n}c_{kl}^{j}(x)\partial_{k}u_{1}^{g}(s-\sigma,x)(\partial_{l}u_{1}^{f})'(\sigma,x)\Big)\Bigg]d\sigma.\\
 \end{aligned}
 \end{align*}
Also 
 \begin{align*}
     \begin{aligned}
 &J_{3}(s,x):=\int\limits_{0}^{s}\partial_{s}^{2}u_{2}(t,x;s)dt=\int\limits_{0}^{s}B^{-1/2}\int\limits_{0}^{t}\sin\left\{(t-\sigma)B^{1/2}\right\}\partial_{s}^{2}F(\sigma,x;s)d\sigma\\
 &\ =\int\limits_{0}^{s}\Bigg(B^{-1/2}\int\limits_{0}^{t}\sin\left\{(t-\sigma)B^{1/2}\right\}\Bigg[2\sum_{j=1}^{n}\partial_{j}\Big(\sum_{k,l=1}^{n}c_{kl}^{j}(x)\partial_{k}u_{1}^{f}(\sigma,x)(\partial_{l}u_{1}^{g})''(s-\sigma,x)\Big)\\
 &\ \ \ \ \ \ \ \qquad \qquad +2\sum_{j=1}^{n}\partial_{j}\Big(\sum_{k,l=1}^{n}c_{kl}^{j}(x)(\partial_{k}u_{1}^{g})''(s-\sigma,x)\partial_{l}u_{1}^{f}(\sigma,x)\Big)\Bigg]d\sigma\Bigg) dt,\\ 
 \end{aligned}
 \end{align*}

 and After using integration by parts $J_{3}(s,x)$ becomes 
 \begin{align*}
 \begin{aligned}
 J_{3}(s,x)&=\int\limits_{0}^{s}\Bigg(\int\limits_{0}^{t}\cos\left\{(t-\sigma)B^{1/2}\right\}\Bigg[2\sum_{j=1}^{n}\partial_{j}\Big(\sum_{k,l=1}^{n}c_{kl}^{j}(x)\partial_{k}u_{1}^{f}(\sigma,x)(\partial_{l}u_{1}^{g})'(s-\sigma,x)+\Big)\\
 &\ \  +2\sum_{j=1}^{n}\partial_{j}\Big(\sum_{k,l=1}^{n}c_{kl}^{j}(x)(\partial_{k}u_{1}^{g})'(s-\sigma,x)\partial_{l}u_{1}^{f}(\sigma,x)\Big)\Bigg]d\sigma\Bigg) dt\\
 &\ \ -\int\limits_{0}^{s}\Bigg(B^{-1/2}\int\limits_{0}^{t}\sin\left\{(t-\sigma)B^{1/2}\right\}\Bigg[2\sum_{j=1}^{n}\partial_{j}\Big(\sum_{k,l=1}^{n}c_{kl}^{j}(x)(\partial_{k}u_{1}^{f})'(\sigma,x)(\partial_{l}u_{1}^{g})'(s-\sigma,x)\Big)\\
 &\ \ \ \ \ \ \ \qquad \qquad +2\sum_{j=1}^{n}\partial_{j}\Big(\sum_{k,l=1}^{n}c_{kl}^{j}(x)(\partial_{k}u_{1}^{g})'(s-\sigma,x)(\partial_{l}u_{1}^{f})'(\sigma,x)\Big)\Bigg]d\sigma\Bigg) dt.\\
 \end{aligned}
 \end{align*}
 Using again integration by parts, we have 
 \begin{align*}
     \begin{aligned}
 J_{3}(s,x)&=\int\limits_{0}^{s}2\sum_{j=1}^{n}\partial_{j}\Bigg[\sum_{k,l=1}^{n}c_{kl}^{j}(x)\partial_{k}u_{1}^{f}(t,x)\partial_{l}u_{1}^{g}(s-t,x)+\sum_{k,l=1}^{n}c_{kl}^{j}(x)\partial_{k}u_{1}^{g}(s-t,x)\partial_{l}u_{1}^{f}(t,x)\Bigg] dt\\
 &\ \ -\int\limits_{0}^{s}\Bigg(B^{1/2}\int\limits_{0}^{t}\sin\left\{(t-\sigma)B^{1/2}\right\}\Bigg[2\sum_{j=1}^{n}\partial_{j}\Big(\sum_{k,l=1}^{n}c_{kl}^{j}(x)\partial_{k}u_{1}^{f}(\sigma,x)\partial_{l}u_{1}^{g}(s-\sigma,x)\Big)\\
 &\ \ \ \ \ \ \ \  \ \ \ \ \ \ \ \ +2\sum_{j=1}^{n}\partial_{j}\Big(\sum_{k,l=1}^{n}c_{kl}^{j}(x)\partial_{k}u_{1}^{g}(s-\sigma,x)\partial_{l}u_{1}^{f}(\sigma,x)\Big)\Bigg]d\sigma\Bigg) dt\\
 &\ \ -\int\limits_{0}^{s}\Bigg(\int\limits_{0}^{t}\cos\left\{(t-\sigma)B^{1/2}\right\}\Bigg[2\sum_{j=1}^{n}\partial_{j}\Big(\sum_{k,l=1}^{n}c_{kl}^{j}(x)(\partial_{k}u_{1}^{f})'(\sigma,x)\partial_{l}u_{1}^{g}(s-\sigma,x)\Big)\\
 &\ \ \ \ \ \ \ \ \ \ \ \ \ \ \ \   \ +2\sum_{j=1}^{n}\partial_{j}\Big(\sum_{k,l=1}^{n}c_{kl}^{j}(x)\partial_{k}u_{1}^{g}(s-\sigma,x)(\partial_{l}u_{1}^{f})'(\sigma,x)\Big)\Bigg]d\sigma\Bigg) dt\\
 &\ \ -\int\limits_{0}^{s}\Bigg(B^{-1/2}\int\limits_{0}^{t}\sin\left\{(t-\sigma)B^{1/2}\right\}\Bigg[2\sum_{j=1}^{n}\partial_{j}\Big(\sum_{k,l=1}^{n}c_{kl}^{j}(x)(\partial_{k}u_{1}^{f})'(\sigma,x)(\partial_{l}u_{1}^{g})'(s-\sigma,x)\Big)\\
 &\ \ \ \ \ \ \ \qquad \qquad +2\sum_{j=1}^{n}\partial_{j}\Big(\sum_{k,l=1}^{n}c_{kl}^{j}(x)(\partial_{k}u_{1}^{g})'(s-\sigma,x)(\partial_{l}u_{1}^{f})'(\sigma,x)\Big)\Bigg]d\sigma\Bigg) dt.\\
 \end{aligned}
 \end{align*}
 Thus, $J_{3}(s,x)$ is given by 
 \begin{align*}
 \begin{aligned}
 J_{3}(s,x)&=2\sum_{j=1}^{n}\partial_{j}\Bigg[\sum_{k,l=1}^{n}c_{kl}^{j}(x)\partial_{k}u_{1}^{f}*\partial_{l}u_{1}^{g}(s,x)+\sum_{k,l=1}^{n}c_{kl}^{j}(x)\partial_{k}u_{1}^{g}*\partial_{l}u_{1}^{f}(s,x)\Bigg] -\int\limits_{0}^{s}Bu_{2}(t,x;s)dt\\
 &\ \ \ \  -\int\limits_{0}^{s}\Bigg(\int\limits_{0}^{t}\cos\left\{(t-\sigma)B^{1/2}\right\}\Bigg[2\sum_{j=1}^{n}\partial_{j}\Big(\sum_{k,l=1}^{n}c_{kl}^{j}(x)(\partial_{k}u_{1}^{f})'(\sigma,x)\partial_{l}u_{1}^{g}(s-\sigma,x)\Big)\\
 &\ \ \ \ \ \ \ \ \ \ \ \ \ \ \ \   \ +2\sum_{j=1}^{n}\partial_{j}\Big(\sum_{k,l=1}^{n}c_{kl}^{j}(x)\partial_{k}u_{1}^{g}(s-\sigma,x)(\partial_{l}u_{1}^{f})'(\sigma,x)\Big)\Bigg]d\sigma\Bigg) dt\\
 &\ \ -\int\limits_{0}^{s}\Bigg(B^{-1/2}\int\limits_{0}^{t}\sin\left\{(t-\sigma)B^{1/2}\right\}\Bigg[2\sum_{j=1}^{n}\partial_{j}\Big(\sum_{k,l=1}^{n}c_{kl}^{j}(x)(\partial_{k}u_{1}^{f})'(\sigma,x)(\partial_{l}u_{1}^{g})'(s-\sigma,x)\Big)\\
 &\ \ \ \ \ \ \ \qquad \qquad +2\sum_{j=1}^{n}\partial_{j}\Big(\sum_{k,l=1}^{n}c_{kl}^{j}(x)(\partial_{k}u_{1}^{g})'(s-\sigma,x)(\partial_{l}u_{1}^{f})'(\sigma,x)\Big)\Bigg]d\sigma\Bigg) dt.\\
 \end{aligned}
 \end{align*}
Hence $J_{1}(s,x)+J_{2}(s,x)+J_{3}(s,x)$ is given by 
 \begin{align*}
 \begin{aligned}
 &J_{1}(s,x)+J_{2}(s,x)+J_{3}(s,x)\\
 &=6\sum_{j=1}^{n}\int\limits_{0}^{s}\cos\left\{(s-\sigma)B^{1/2}\right\}\partial_{j}\Bigg[\sum_{k,l=1}^{n}c_{kl}^{j}(x)\left\{\partial_{k}u_{1}^{f}(\sigma,x)\partial_{l}u_{1}^{g}(s-\sigma,x)+\partial_{k}u_{1}^{g}(s-\sigma,x)\partial_{l}u_{1}^{f}(\sigma,x)
 \right\}\Bigg]d\sigma\\
 &\ +2\sum_{j=1}^{n}\partial_{j}\Bigg[\sum_{k,l=1}^{n}c_{kl}^{j}(x)\partial_{k}u_{1}^{f}*\partial_{l}u_{1}^{g}(s,x)+\sum_{k,l=1}^{n}c_{kl}^{j}(x)\partial_{k}u_{1}^{g}*\partial_{l}u_{1}^{f}(s,x)\Bigg] -\int\limits_{0}^{s}Bu_{2}(t,x;s)dt\\
 &\ -4\sum_{j=1}^{n}B^{-1/2}\int\limits_{0}^{s}\sin\left\{(s-\sigma)B^{1/2}\right\}\partial_{j}\Bigg[\sum_{k,l=1}^{n}c_{kl}^{j}(x)\Big\{(\partial_{k}u_{1}^{f})'(\sigma,x)\partial_{l}u_{1}^{g}(s-\sigma,x)\\
 &\ \ \ \ \ \ \ \ \ \ \quad \qquad \qquad \qquad \qquad \qquad \qquad \qquad \qquad \quad +\partial_{k}u_{1}^{g}(s-\sigma,x)(\partial_{l}u_{1}^{f})'(\sigma,x)\Big\}\Bigg]d\sigma\Bigg) dt\\
 &\ \ \   -2\sum_{j=1}^{n}\int\limits_{0}^{s}\Bigg(\int\limits_{0}^{t}\cos\left\{(t-\sigma)B^{1/2}\right\}\partial_{j}\Bigg[\sum_{k,l=1}^{n}c_{kl}^{j}(x)\Big\{(\partial_{k}u_{1}^{f})'(\sigma,x)\partial_{l}u_{1}^{g}(s-\sigma,x)\\
 &\ \ \ \ \ \ \ \ \ \ \quad \qquad \qquad \qquad \qquad \qquad \qquad \qquad \qquad \quad +\partial_{k}u_{1}^{g}(s-\sigma,x)(\partial_{l}u_{1}^{f})'(\sigma,x)\Big\}\Bigg]d\sigma\Bigg) dt\\
 &\ \ -2\sum_{j=1}^{n}\int\limits_{0}^{s}\Bigg(B^{-1/2}\int\limits_{0}^{t}\sin\left\{(t-\sigma)B^{1/2}\right\}\partial_{j}\Bigg[\Big(\sum_{k,l=1}^{n}c_{kl}^{j}(x)\Big\{(\partial_{k}u_{1}^{f})'(\sigma,x)(\partial_{l}u_{1}^{g})'(s-\sigma,x)\\
 &\qquad \qquad \qquad \qquad \qquad \qquad \qquad \qquad \qquad \qquad \ \ +(\partial_{k}u_{1}^{g})'(s-\sigma,x)(\partial_{l}u_{1}^{f})'(\sigma,x)\Big\}\Bigg]d\sigma\Bigg) dt.\\
 \end{aligned}
 \end{align*}
 Thus, we have the equation for $u_{2}^{(-1)}(s,x)$ is given by 
 \begin{align}\label{Equation for u2(-1)}
 \begin{aligned}
 \begin{cases}
 &\partial_{s}^{2}u_{2}^{(-1)}(s,x)-\nabla_{x}(\gamma(x)\nabla_{x}u_{2}^{(-1)}(s,x))\\
 & =2\sum_{j=1}^{n}\partial_{j}\Bigg[\sum_{k,l=1}^{n}c_{kl}^{j}(x)\Big\{\partial_{k}u_{1}^{f}*\partial_{l}u_{1}^{g}(s,x)+\partial_{k}u_{1}^{g}*\partial_{l}u_{1}^{f}(s,x)\Big\}\Bigg]\\
 &\ \ \ +I_{1}(s,x)+I_{2}(s,x)+I_{3}(s,x)+I_{4}(s,x), s\in\mathbb{R},\ x\in\Omega,\\
 &u_{2}^{(-1)}(0,x)=\partial_{s}u_{2}^{(-1)}(0,x)=0,\ x\in\Omega,\\
 &u_{2}^{(-1)}(s,x)|_{\partial Q}=0,
 \end{cases}
 \end{aligned}
 \end{align}
 where 
 \begin{align}\label{Definition of Ij's}
 \begin{aligned}
 &I_{1}(s,x):=6\sum_{j=1}^{n}\int\limits_{0}^{s}\cos\left\{(s-\sigma)B^{1/2}\right\}\partial_{j}\Bigg[\sum_{k,l=1}^{n}c_{kl}^{j}(x)\left\{\partial_{k}u_{1}^{f}(\sigma,x)\partial_{l}u_{1}^{g}(s-\sigma,x)+\partial_{k}u_{1}^{g}(s-\sigma,x)\partial_{l}u_{1}^{f}(\sigma,x)
 \right\}\Bigg]d\sigma,\\
 &I_{2}(s,x):=-4\sum_{j=1}^{n}B^{-1/2}\int\limits_{0}^{s}\sin\left\{(s-\sigma)B^{1/2}\right\}\partial_{j}\Bigg[\sum_{k,l=1}^{n}c_{kl}^{j}(x)\Big\{(\partial_{k}u_{1}^{f})'(\sigma,x)\partial_{l}u_{1}^{g}(s-\sigma,x)\\
 &\ \ \ \ \ \ \ \ \ \ \quad \qquad \qquad \qquad \qquad \qquad \qquad \qquad \qquad \quad +\partial_{k}u_{1}^{g}(s-\sigma,x)(\partial_{l}u_{1}^{f})'(\sigma,x)\Big\}\Bigg]d\sigma\Bigg) dt,\\
 &I_{3}(s,x):=-2\sum_{j=1}^{n}\int\limits_{0}^{s}\Bigg(\int\limits_{0}^{t}\cos\left\{(t-\sigma)B^{1/2}\right\}\partial_{j}\Bigg[\sum_{k,l=1}^{n}c_{kl}^{j}(x)\Big\{(\partial_{k}u_{1}^{f})'(\sigma,x)\partial_{l}u_{1}^{g}(s-\sigma,x)\\
 &\ \ \ \ \ \ \ \ \ \ \quad \qquad \qquad \qquad \qquad \qquad \qquad \qquad \qquad \quad +\partial_{k}u_{1}^{g}(s-\sigma,x)(\partial_{l}u_{1}^{f})'(\sigma,x)\Big\}\Bigg]d\sigma\Bigg) dt,\\
 &I_{4}(s,x):= -2\sum_{j=1}^{n}\int\limits_{0}^{s}\Bigg(B^{-1/2}\int\limits_{0}^{t}\sin\left\{(t-\sigma)B^{1/2}\right\}\partial_{j}\Bigg[\Big(\sum_{k,l=1}^{n}c_{kl}^{j}(x)\Big\{(\partial_{k}u_{1}^{f})'(\sigma,x)(\partial_{l}u_{1}^{g})'(s-\sigma,x)\\
 &\qquad \qquad \qquad \qquad \qquad \qquad \qquad \qquad \qquad \qquad \ \ +(\partial_{k}u_{1}^{g})'(s-\sigma,x)(\partial_{l}u_{1}^{f})'(\sigma,x)\Big\}\Bigg]d\sigma\Bigg) dt.\\
 \end{aligned}
 \end{align}
 Also the corresponding Neumann data is 
 \begin{align}\label{Neumann data}
 \begin{aligned}
 &\gamma(x)\partial_{\nu}u_{2}^{(-1)}(t,x)+2\sum_{j=1}^{n}\nu_{j}(x)\Big(\sum_{k,l=1}^{n}c_{kl}^{j}(x)\partial_{k}u_{1}^{f}*\partial_{l}u_{1}^{g}(t,x)\Big)\\
 &+2\sum_{j=1}^{n}\nu_{j}(x)\Big(\sum_{k,l=1}^{n}c_{kl}^{j}(x)\partial_{k}u_{1}^{g}*\partial_{l}u_{1}^{f}(t,x)\Big)=0,\ \  (t,x)\in {\mathbb R}\times\partial\Omega.
 \end{aligned}
 \end{align}
 Furthermore the approximation of each of $I_{j}(s,x)'s$ is given by 
 \begin{align}\label{Approximation for I1}
 \begin{aligned}
 &I_{1}(s,x)\approx 6\sum_{j=1}^{n}\int\limits_{0}^{s}\partial_{j}\Bigg[\sum_{k,l=1}^{n}c_{kl}^{j}(x)\left\{\partial_{k}u_{1}^{f}(\sigma,x)\partial_{l}u_{1}^{g}(s-\sigma,x)+\partial_{k}u_{1}^{g}(s-\sigma,x)\partial_{l}u_{1}^{f}(\sigma,x)
 \right\}\Bigg]d\sigma\\
 &\ \ \ =6\sum_{j=1}^{n}\partial_{j}\Bigg[\sum_{k,l=1}^{n}c_{kl}^{j}(x)\Big\{\partial_{k}u_{1}^{f}*\partial_{l}u_{1}^{g}(s,x)+\partial_{k}u_{1}^{g}*\partial_{l}u_{1}^{f}(s,x)\Big\}\Bigg]
 \end{aligned}
 \end{align}

 \begin{align}\label{Approximation for I2}
 \begin{aligned}
 &I_{2}(s,x)\approx -4\sum_{j=1}^{n}\int\limits_{0}^{s}(s-\sigma)\partial_{j}\Bigg[\sum_{k,l=1}^{n}c_{kl}^{j}(x)\Big\{(\partial_{k}u_{1}^{f})'(\sigma,x)\partial_{l}u_{1}^{g}(s-\sigma,x) +\partial_{k}u_{1}^{g}(s-\sigma,x)(\partial_{l}u_{1}^{f})'(\sigma,x)\Big\}\Bigg]d\sigma\\
 &\ \ =-4\sum_{j=1}^{n}\partial_{j}\left[\sum_{k,l=1}^{n}c_{kl}^{j}(x)\left\{\left((\partial_{k}u_{1}^{f})'*F_{l}\right)(s,x)+\left(F_{k}*(\partial_{l}u_{1}^{f})'\right)(s,x)\right\}\right],
 \end{aligned}
 \end{align}
 where \[F_{k}(s,x):=s\partial_{k}u_{1}^{g}(s,x),\,\, F_{l}(s,x):= s\partial_{l}u_{1}^{g}(s,x).\]
 Here for instance \eqref{Approximation for I2} has been derived as follows. Observe that
 \begin{equation}\label{derivation of dominant part of I_2}
 \begin{array}{ll}
 B^{-1/2}\sin\left\{(s-\sigma)B^{1/2}\right\}
 =\int_0^\infty \lambda^{-1/2}\sin\left\{\lambda^{1/2}(s-\sigma)\right\}dE(\lambda)\\
 =(s-\sigma)I-(s-\sigma)^2\int_0^\infty\lambda^{1/2}\big(\int_0^1(1-\theta)\sin\{\theta(s-\sigma)\lambda^{1/2}\}\,d\theta\big)\,dE(\lambda),
 \end{array}
 \end{equation}
 where $\{E(\lambda)\}_{\lambda>0}$ are projection operators on $L^2(\Omega)$ giving the spectral decomposition
 $B=\int_0^\infty \lambda dE(\lambda)$ of $B$. Then we have just taken the first term of \eqref{derivation of dominant part of I_2}. Of course we have to put the meaning to this approximation. We will give that later in Appendix by taking the Laplace transform. More precisely we will show that the Laplace transform of $I_2(s,x)$ has its dominant part given as the Laplace transform of the dominant part given by \eqref{Approximation for I2}. 
 \begin{align*}
 \begin{aligned}
 &I_{3}(s,x)\approx -2\sum_{j=1}^{n}\int\limits_{0}^{s}\Bigg(\int\limits_{0}^{t}\partial_{j}\Bigg[\sum_{k,l=1}^{n}c_{kl}^{j}(x)\Big\{(\partial_{k}u_{1}^{f})'(\sigma,x)\partial_{l}u_{1}^{g}(s-\sigma,x) +\partial_{k}u_{1}^{g}(s-\sigma,x)(\partial_{l}u_{1}^{f})'(\sigma,x)\Big\}\Bigg]d\sigma\Bigg) dt\\
 &\ = -2\sum_{j=1}^{n}\int\limits_{\sigma=0}^{\sigma=s}\Bigg(\int\limits_{t=\sigma}^{t=s}\partial_{j}\Bigg[\sum_{k,l=1}^{n}c_{kl}^{j}(x)\Big\{(\partial_{k}u_{1}^{f})'(\sigma,x)\partial_{l}u_{1}^{g}(s-\sigma,x) +\partial_{k}u_{1}^{g}(s-\sigma,x)(\partial_{l}u_{1}^{f})'(\sigma,x)\Big\}\Bigg]dt\Bigg) d\sigma\\
 &\ =-2\sum_{j=1}^{n}\int\limits_{\sigma=0}^{\sigma=s}(s-\sigma)\partial_{j}\Bigg[\sum_{k,l=1}^{n}c_{kl}^{j}(x)\Big\{(\partial_{k}u_{1}^{f})'(\sigma,x)\partial_{l}u_{1}^{g}(s-\sigma,x) +\partial_{k}u_{1}^{g}(s-\sigma,x)(\partial_{l}u_{1}^{f})'(\sigma,x)\Big\}\Bigg] d\sigma\\
 &\ = -2\sum_{j=1}^{n}\partial_{j}\left[\sum_{k,l=1}^{n}c_{kl}^{j}(x)\left\{((\partial_{k}u_{1}^{f})'*F_{l})(s,x)+(F_{k}*(\partial_{l}u_{1}^{f})')(s,x)\right\}\right].
 \end{aligned}
 \end{align*}
 Thus, we have 
 \begin{align}\label{Approximation for I3}
 I_{3}(s,x)\approx -2\sum_{j=1}^{n}\partial_{j}\left[\sum_{k,l=1}^{n}c_{kl}^{j}(x)\left\{\left((\partial_{k}u_{1}^{f})'*F_{l}\right)(s,x)+\left(F_{k}*(\partial_{l}u_{1}^{f})'\right)(s,x)\right\}\right]
 \end{align}
 where 
 \[ F_{k}(s,x):=s\partial_{k}u_{1}^{g}(s,x),\,\, F_{l}(s,x):= s\partial_{l}u_{1}^{g}(s,x).\]
 Further for $I_{4}(s,x)$, we first integrate by parts to have 
 \begin{align*}
 \begin{aligned}
 &I_{4}(s,x)\approx -2\sum_{j=1}^{n}\int\limits_{0}^{s}\int\limits_{0}^{t}(t-\sigma)\partial_{j}\Bigg[\sum_{k,l=1}^{n}c_{kl}^{j}(x)\Big\{(\partial_{k}u_{1}^{f})'(\sigma,x)(\partial_{l}u_{1}^{g})'(s-\sigma,x)\\
 &\qquad \qquad \qquad \qquad \qquad \qquad \qquad \qquad \quad\qquad +(\partial_{k}u_{1}^{g})'(s-\sigma,x)(\partial_{l}u_{1}^{f})'(\sigma,x)\Big\}\Bigg]d\sigma dt\\
 &\ \ \ \ = -2\sum_{j=1}^{n}\int\limits_{\sigma=0}^{\sigma=s}\int\limits_{t=\sigma}^{t=s}(t-\sigma)\partial_{j}\Bigg[\sum_{k,l=1}^{n}c_{kl}^{j}(x)\Big\{(\partial_{k}u_{1}^{f})'(\sigma,x)(\partial_{l}u_{1}^{g})'(s-\sigma,x)\\ 
 &\qquad \qquad \qquad \qquad \qquad \qquad \qquad \qquad \quad\qquad +(\partial_{k}u_{1}^{g})'(s-\sigma,x)(\partial_{l}u_{1}^{f})'(\sigma,x)\Big\}\Bigg]dt d\sigma\\
 &\ \ \ \ =-\sum_{j=1}^{n}\int\limits_{0}^{s}(s-\sigma)^{2}\partial_{j}\Bigg[\sum_{k,l=1}^{n}c_{kl}^{j}(x)\Big\{(\partial_{k}u_{1}^{f})'(\sigma,x)(\partial_{l}u_{1}^{g})'(s-\sigma,x)\\ 
 &\qquad \qquad \qquad \qquad \qquad \qquad \qquad \qquad \quad\qquad +(\partial_{k}u_{1}^{g})'(s-\sigma,x)(\partial_{l}u_{1}^{f})'(\sigma,x)\Big\}\Bigg] d\sigma.\\
 \end{aligned}
 \end{align*}
 Then we have 
 \begin{align}\label{Approximation for I4}
 I_{4}(s,x)\approx -\sum_{j=1}^{n}\partial_{j}\left[ \sum_{k,l=1}^{n}\left\{\left((\partial_{k}u_{1}^{f})'*G_{l}\right)(s,x)+\left(G_{k}*(\partial_{k}u_{1}^{f})'\right)(s,x)\right\}\right],
 \end{align}
 where 
 \[G_{k}(s,x):= s^{2}(\partial_{k}u_{1}^{g})'(s,x),\,\, G_{l}(s,x):= s^{2}(\partial_{l}u_{1}^{g})'(s,x) .\]

By the standard estimate for the initial boundary value problem for second order hyperbolic equations of divergence form and an argument similar to derive the estimate \eqref{spectral estimate} to handle $I_j,\,j=1,2,3,4$, we have  \begin{equation}\label{estimate of u_2(-1)}
 	\Vert u^{(-1)}_2(s)\Vert_{H^1(\Omega)}=O(|s|^2),\, |s|\gg1. 
\end{equation}
 
 Next we define the Laplace transform of a function $f$ satisfying \eqref{estimate of u_2(-1)} by 
 \[\widehat{f}(\tau,x):=\int\limits_{0}^{\infty}e^{-t\tau}f(t,x)\D x\]
which is well-defined for all $\tau=\tau_{R}+i\tau_{I}$ with $\tau_{R}\geq 1$. 
  Further, for $\tau\in \mathbb{C}$ defined as above, we have $\widehat{u_{2}^{(-1)}}(\tau,x)$ and $\widehat{\partial_{k}u_{1}^{f}}(\tau,x)$, $\widehat{\partial_{j}u_{1}^{g}}(\tau,x)$ are well-defined because of the estimate \eqref{estimate of u_2(-1)} and standard energy estimate for $u_{1}^{f\ \mbox{or}\  g}$ respectively.  Now for each fixed $\tau$, we have $\widehat{u_{2}^{(-1)}}(\tau,x)$ is the solution to the following boundary value problem:
 \begin{align}\label{equation for widehat u2 (-1)}
 \begin{aligned}
 \begin{cases}
 &\tau^{2}\widehat{u^{(-1)}_{2}}(\tau,x)-\nabla_{x}\cdot(\gamma(x)\nabla_{x}\widehat{u_{2}^{(-1)}})(\tau,x)\\
 &\ \ \  = 2\sum_{j=1}^{n}\partial_{j}\Bigg[\sum_{k,l=1}^{n}c_{kl}^{j}(x)\left\{\partial_{k}\widehat{u_{1}^{f}}(\tau,x)\partial_{l}\widehat{u_{1}^{g}}(\tau,x)+\partial_{k}\widehat{u_{1}^{g}}(\tau,x)\partial_{l}\widehat{u_{1}^{f}}(\tau,x)\right\}\Bigg]\\
 &\qquad\qquad +\widehat{I_{1}}(\tau,x)+\widehat{I_{2}}(\tau,x)+\widehat{I_{3}}(\tau,x)+\widehat{I_{4}}(\tau,x),\ \  x\in\Omega,\\
 &\widehat{u^{(-1)}_{2}}(\tau,x)=0,\ \ x\in\partial\Omega.
 \end{cases}
 \end{aligned}
 \end{align}
 Also, from \eqref{Neumann data} we have
 \begin{align}\label{equality of Neumann data}
 \begin{aligned}
 &\gamma(x)\partial_{\nu}\widehat{u_{2}^{(-1)}}(\tau,x)+2\sum_{j=1}^{n}\nu_{j}(x)\Big(\sum_{k,l=1}^{n}c_{kl}^{j}(x)\partial_{k}\widehat{u_{1}^{f}}(\tau,x)\partial_{l}\widehat{u_{1}^{g}}(\tau,x)\Big)\\
 &+2\sum_{j=1}^{n}\nu_{j}(x)\Big(\sum_{k,l=1}^{n}c_{kl}^{j}(x)\partial_{k}\widehat{u_{1}^{g}}(\tau,x)\partial_{l}\widehat{u_{1}^{f}}(\tau,x)\Big)=0, \ \ x\in\partial\Omega.
 \end{aligned}
 \end{align}
 
 Next take a solution $\widehat{w}(\tau,x)$ of the following equation
 \begin{align}\label{equation for v}
 \tau^{2}v-\nabla_{x}\cdot(\gamma(x)\nabla_{x}v)=0,\ \ \ x \in\Omega.
 \end{align}
 Multiplying the first equation of $\eqref{equation for widehat u2 (-1)}$ by $\widehat{w}(\tau,x)$ and integrating over $\Omega$, we have
\begin{align*}
 \begin{aligned}
 &\int\limits_{\Omega}\tau^{2}\widehat{u^{(-1)}_{2}}(\tau,x)\widehat{w}(\tau,x)dx-\int\limits_{\Omega}\nabla_{x}\cdot(\gamma(x)\nabla_{x}\widehat{u_{2}^{(-1)}})(\tau,x)\widehat{w}(\tau,x)dx\\ &\ \ \ \  =2\sum_{j=1}^{n}\sum_{k,l=1}^{n}\int\limits_{\Omega}\partial_{j}\Big(c_{kl}^{j}(x)\partial_{k}\widehat{u_{1}^{f}}(\tau,x)\partial_{l}\widehat{u_{1}^{g}}(\tau,x)\Big)\widehat{w}(\tau,x)dx\\
 &\ \ \ \ \ \ +2\sum_{j=1}^{n}\sum_{k,l=1}^{n}\int\limits_{\Omega}\partial_{j}\Big(c_{kl}^{j}(x)\partial_{k}\widehat{u_{1}^{g}}(\tau,x)\partial_{l}\widehat{u_{1}^{f}}(\tau,x)\Big)\widehat{w}(\tau,x)dx+\int\limits_{\Omega}\widehat{I_{1}}(\tau,x)\widehat{w}(\tau,x)\D x\\
 & \ \ \ \ \ \ + \int\limits_{\Omega}\widehat{I_{2}}(\tau,x)\widehat{w}(\tau,x)\D x +\int\limits_{\Omega}\widehat{I_{3}}(\tau,x)\widehat{w}(\tau,x)\D x+\int\limits_{\Omega}\widehat{I_{4}}(\tau,x)\widehat{w}(\tau,x)\D x.
 \end{aligned}
 \end{align*}
 Then, integration by parts and using the fact that $\widehat{w}(\tau,x)$ is a solution of $\eqref{equation for v}$, we have
 \begin{align*}
 \begin{aligned}
 &-\int\limits_{\partial\Omega}\gamma(x)\partial_{\nu}\widehat{u_{2}^{(-1)}}(\tau,x)\widehat{w}(\tau,x)dS_{x}+\int\limits_{\partial\Omega}\gamma(x)\widehat{u_{2}^{(-1)}}(\tau,x)\partial_{\nu}\widehat{w}(\tau,x)dS_{x}=\\
 &\ \ \ -2\sum_{j=1}^{n}\sum_{k,l=1}^{n}\int\limits_{\Omega}\Big(c_{kl}^{j}(x)\partial_{k}\widehat{u_{1}^{f}}(\tau,x)\partial_{l}\widehat{u_{1}^{g}}(\tau,x)\Big)\partial_{j}\widehat{w}(\tau,x)dx\\
 &\ \ \ -2\sum_{j=1}^{n}\sum_{k,l=1}^{n}\int\limits_{\Omega}\Big(c_{kl}^{j}(x)\partial_{k}\widehat{u_{1}^{g}}(\tau,x)\partial_{l}\widehat{u_{1}^{f}}(\tau,x)\Big)\partial_{j}\widehat{w}(\tau,x)dx\\
 &\ \ \ +2\sum_{j=1}^{n}\sum_{k,l=1}^{n}\int\limits_{\partial\Omega}\nu_{j}(x)\Big(c_{kl}^{j}(x)\partial_{k}\widehat{u_{1}^{f}}(\tau,x)\partial_{l}\widehat{u_{1}^{g}}(\tau,x)\Big)\widehat{w}(\tau,x)dS_{x}\\
 &\ \ \ +2\sum_{j=1}^{n}\sum_{k,l=1}^{n}\int\limits_{\partial\Omega}\nu_{j}(x)\Big(c_{kl}^{j}(x)\partial_{k}\widehat{u_{1}^{g}}(\tau,x)\partial_{l}\widehat{u_{1}^{f}}(\tau,x)\Big)\widehat{w}(\tau,x)dS_{x} +\int\limits_{\Omega}\widehat{I_{1}}(\tau,x)\widehat{w}(\tau,x)\D x\\
 & \ \ \  + \int\limits_{\Omega}\widehat{I_{2}}(\tau,x)\widehat{w}(\tau,x)\D x +\int\limits_{\Omega}\widehat{I_{3}}(\tau,x)\widehat{w}(\tau,x)\D x+\int\limits_{\Omega}\widehat{I_{4}}(\tau,x)\widehat{w}(\tau,x)\D x.
 \end{aligned}
 \end{align*}
 Using \eqref{equality of Neumann data}, this implies
 \begin{align}\label{integral identity}
 \begin{aligned}
 &-2\sum_{j=1}^{n}\sum_{k,l=1}^{n}\int\limits_{\Omega}c_{kl}^{j}(x)\Big\{\partial_{k}\widehat{u_{1}^{f}}(\tau,x)\partial_{l}\widehat{u_{1}^{g}}(\tau,x)+\partial_{k}\widehat{u_{1}^{g}}(\tau,x)\partial_{l}\widehat{u_{1}^{f}}(\tau,x)\Big\}\partial_{j}\widehat{w}(\tau,x)dx\\
 & \ \ \ +\int\limits_{\Omega}\widehat{I_{1}}(\tau,x)\widehat{w}(\tau,x)\D x + \int\limits_{\Omega}\widehat{I_{2}}(\tau,x)\widehat{w}(\tau,x)\D x +\int\limits_{\Omega}\widehat{I_{3}}(\tau,x)\widehat{w}(\tau,x)\D x+\int\limits_{\Omega}\widehat{I_{4}}(\tau,x)\widehat{w}(\tau,x)\D x=0,
 \end{aligned}
 \end{align}
 where $\widehat{u_{1}^{f}}(\tau,x)$ and  $\widehat{u_{1}^{g}}(\tau,x)$ are solution to the following equation 
 \begin{align}\label{equation for widehat u}
 \tau^{2}u-\nabla_{x}\cdot(\gamma(x)\nabla_{x}u)=0\, \ \text{in} \ \ \Omega
 \end{align}
 with the Dirichlet data on $\partial\Omega$ equal to $f$ and $g$, respectively. Here note that by Appendix C, each term of the left hand side of this equation is analytic in $S^i$, where $S^i$ is the interior of $S:=\{\tau=\tau_R+\sqrt{-1}\,\tau_I\in\mathbb{C}: \tau_R,\,\tau_I\in\mathbb{R},\, \tau_R\ge1,\,|\tau_I|<\kappa|\tau_R|\}$ with a fixed small $\kappa>0$. By using Lemma \ref{Useful lemma} and Appendix D, we have 
 \begin{align*}
 \begin{aligned}
 &-2\sum_{j=1}^{n}\sum_{k,l=1}^{n}\int\limits_{\Omega}c_{kl}^{j}(x)\Big\{\partial_{k}\widehat{u_{1}^{f}}(\tau,x)\partial_{l}\widehat{u_{1}^{g}}(\tau,x)+\partial_{k}\widehat{u_{1}^{g}}(\tau,x)\partial_{l}\widehat{u_{1}^{f}}(\tau,x)\Big\}\partial_{j}\widehat{w}(\tau,x)dx\\
 &-6\sum_{j=1}^{n}\sum_{k,l=1}^{n}c_{kl}^{j}(x)\Big\{\partial_{k}\widehat{u_{1}^{f}}(\tau,x)\partial_{l}\widehat{u_{1}^{g}}(\tau,x)+\partial_{k}\widehat{u_{1}^{g}}(\tau,x)\partial_{l}\widehat{u_{1}^{f}}(\tau,x)\Big\}\PD_{j}\widehat{w}(\tau,x)\D x\\
 &+4\sum_{j=1}^{n}\sum_{k,l=1}^{n}\int\limits_{\Omega}c_{kl}^{j}(x)\left\{\widehat{(\partial_{k}u_{1}^{f})'}(\tau,x)\widehat{F_{l}}(\tau,x)+\widehat{F_{k}}\widehat{(\partial_{l}u_{1}^{f})'}(\tau,x)\right\}\partial_{j}\widehat{w}(\tau,x)\D x\\
 &+2\sum_{j=1}^{n}\sum_{k,l=1}^{n}\int\limits_{\Omega}c_{kl}^{j}(x)\left\{\widehat{(\partial_{k}u_{1}^{f})'}(\tau,x)\widehat{F_{l})}(\tau,x)+\widehat{F_{k}}(\tau,x)\widehat{(\partial_{l}u_{1}^{f})'}(\tau,x)\right\}\partial_{j}\widehat{w}(\tau,x)\D x\\
 &+\sum_{j=1}^{n}\sum_{k,l=1}^{n}\int\limits_{\Omega} \left\{\widehat{(\partial_{k}u_{1}^{f})'}(\tau,x)\widehat{G_{l}}(\tau,x)+\widehat{G_{k}}(\tau,x)\widehat{(\partial_{k}u_{1}^{f})'}(\tau,x)\right\}\partial_{j}\widehat{w}(\tau,x)\D x=0
 \end{aligned}
 \end{align*}
 holds for $\tau\in S^i$.
 This will give us 
 	\begin{align}\label{Integral identity after using useful lemma}
 	\begin{aligned}
 	 &-8\sum_{j=1}^{n}\sum_{k,l=1}^{n}c_{kl}^{j}(x)\Big\{\partial_{k}\widehat{u_{1}^{f}}(\tau,x)\partial_{l}\widehat{u_{1}^{g}}(\tau,x)+\partial_{k}\widehat{u_{1}^{g}}(\tau,x)\partial_{l}\widehat{u_{1}^{f}}(\tau,x)\Big\}\PD_{j}\widehat{w}(\tau,x)\D x\\
 	&+4\sum_{j=1}^{n}\sum_{k,l=1}^{n}\int\limits_{\Omega}c_{kl}^{j}(x)\left\{\widehat{(\partial_{k}u_{1}^{f})'}(\tau,x)\widehat{F_{l}}(\tau,x)+\widehat{F_{k}}\widehat{(\partial_{l}u_{1}^{f})'}(\tau,x)\right\}\partial_{j}\widehat{w}(\tau,x)\D x\\
 	&+2\sum_{j=1}^{n}\sum_{k,l=1}^{n}\int\limits_{\Omega}c_{kl}^{j}(x)\left\{\widehat{(\partial_{k}u_{1}^{f})'}(\tau,x)\widehat{F_{l})}(\tau,x)+\widehat{F_{k}}(\tau,x)\widehat{(\partial_{l}u_{1}^{f})'}(\tau,x)\right\}\partial_{j}\widehat{w}(\tau,x)\D x\\
 	&+\sum_{j=1}^{n}\sum_{k,l=1}^{n}\int\limits_{\Omega} \left\{\widehat{(\partial_{k}u_{1}^{f})'}(\tau,x)\widehat{G_{l}}(\tau,x)+\widehat{G_{k}}(\tau,x)\widehat{(\partial_{k}u_{1}^{f})'}(\tau,x)\right\}\partial_{j}\widehat{w}(\tau,x)\D x=0.
 	\end{aligned}
 	\end{align}
 The above identity holds for any solution  $\widehat{w}(\tau,x)$ to \eqref{equation for widehat u} and any $\tau\in S^i$.
 
 Now we want to use complex geometric optics solutions (CGO solutions) for $\widehat{u^{f}_{1}}(\cdot,\tau)$, $\widehat{u_{1}^{g}}(\cdot,\tau)$ for a fixed $\tau>1$ to derive $c^{j}_{kl}(x)=0$, for all $1\leq j,k,l\leq n$ and $x\in\Omega$.  
 
 Let $u$ be the solution to
 \begin{equation}\label{standard linear equation}
 \left\{\begin{array}{ll}
 \partial_t^2 u-\nabla\cdot(\gamma\nabla u)=0\,\,&\text{in $Q_T$},\\
 u=\chi(t)\widetilde{f}(x)\,\,&\text{on $\partial Q_T$},\\
 u(0,x)=\PD_{t}u(0,x)=0\,\,& \mbox{in} \ \  \Omega.
 \end{array}
 \right.
 \end{equation}
 Consider the even extension of this $u$ corresponding to the even extension of the above $f=\chi(t)\widetilde{f}$. By abusing the notation, we denote the extended $u$ by the same notation $u$. Further extend this $u$ to the whole ${\mathbb R}\times\Omega$ and denote the extended one by the same notation $u$. Then $\widehat u=\widehat u(\tau, \cdot)$ is the solution to the following boundary value problem:
 \begin{equation}\label{Equation for widehat u}
 \left\{
 \begin{array}{ll}
 \tau^2 \widehat u-\nabla\cdot(\gamma\nabla\widehat u)=0\,\,&\text{in $\Omega$},\\
 \widehat u=\widehat\chi(\tau)\widetilde{f}\,\,&\text{on $\partial\Omega$}.
 \end{array}
 \right.
 \end{equation}
 If we fix $\tau>0$ large enough, then note that $\widehat\chi(\tau)>0$. Hence  $\widehat\chi(\tau)\widetilde f$ can be taken arbitrarily due to the freedom of choosing $\widetilde f$. Hence we can just look for some special solution $\widehat u$ of $\tau^2\widehat u-\nabla\cdot(\gamma\nabla_{x}\widehat u)=0$ in ${\mathbb R}^n$ which would be the CGO solution. One might concern about the freedom of choosing $f=\chi(t)\widetilde f$ in the argument given here, because the original $f$ should be coming from that in \eqref{equation of interest}. As already mentioned before on the unique solvability of \eqref{equation of interest},
 recall that $B_M^0$ can be any fixed single $f\in C_0^\infty(\partial Q_T)$ and can consider the
 $\epsilon$-expansion of the solution $u$ to \eqref{equation of interest}with this Dirichlet data $\epsilon f,\,0<\epsilon<\epsilon_0$ to derive \eqref{standard linear equation}.
 
 To have the CGO solutions  of \eqref{equation for widehat u} with fixed $\tau>0$ large enough, we will make use of the following theorem given by Sylvester and Uhlmann.
 \begin{theorem}[See Theorem $1.1$ in \cite{Sylvester Uhlmann}]
 	Let $\Omega\subset\mathbb{R}^{n}$ be  a bounded domain and $s>n/2$,  there exists a constant  $C(\tau)$ such that if 
 	\begin{align*}
 	\zeta\cdot\zeta=0, \ \ \zeta\in\mathbb{C}^{n}
 	\end{align*}
 	and $|\zeta|$ is large, then there exists $u(x,\zeta;\tau)$ solving to $\eqref{equation for widehat u}$ such that
 	\begin{align*}
 	u(x,\zeta;\tau):= e^{\zeta\cdot x}m(x)\lb 1+R(x,\zeta;\tau)\rb,
 	\end{align*}
 	where $m(x)=\gamma^{-\frac{1}{2}}(x)$ and
 	\begin{align}\label{Hs estimate on R}
 	   \lVert R\rVert_{H^{s}(\Omega)}\leq \frac{C}{\lvert\zeta\rvert}
 	\end{align}
 	and it also satisfies 
 	\begin{align*}
 \lVert R\rVert_{L^{2}(\Omega)}\leq \frac{C(\tau)}{\lvert\zeta\rvert},\ \lVert\nabla_{x}R\rVert_{L^{2}(\Omega)}	\leq C(\tau)
 	\end{align*}
 	where $C(\tau)$ is a constant depending only on $\tau, \gamma$ and $\Omega$.
 \end{theorem}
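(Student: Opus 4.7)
The plan is to reduce Equation \eqref{equation for widehat u} to a Schr\"odinger equation by a Liouville substitution and then construct $u(x,\zeta;\tau)$ in the Sylvester--Uhlmann style via a Faddeev-type Green operator.

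First I would set $u = m v$ with $m = \gamma^{-1/2}$. A direct computation, using the cancellation $\nabla(\gamma m)\cdot \nabla v + \gamma\, \nabla v \cdot \nabla m = 0$, transforms $\tau^{2}u - \nabla\cdot(\gamma \nabla u) = 0$ into
\begin{equation*}
-\Delta v + q_{\tau}(x)\, v = 0 \quad \text{in } \Omega, \qquad q_{\tau}(x) := \frac{\Delta(\gamma^{1/2})}{\gamma^{1/2}} + \frac{\tau^{2}}{\gamma(x)},
\end{equation*}
so $q_{\tau}\in C^{\infty}(\overline{\Omega})$. Multiplying by a smooth cutoff, I would extend $q_{\tau}$ to a compactly supported smooth function on $\mathbb{R}^{n}$, still denoted $q_{\tau}$; solutions produced on $\mathbb{R}^{n}$ then restrict to solutions on $\Omega$ of the original equation, and the ansatz $u = e^{\zeta\cdot x}m(1+R)$ follows once we have a corresponding $v = e^{\zeta\cdot x}(1+R)$.

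Next, exploiting the identity $-\Delta(e^{\zeta\cdot x}w) = -e^{\zeta\cdot x}(\Delta w + 2\zeta\cdot\nabla w)$ valid for null $\zeta$ (that is, $\zeta\cdot\zeta=0$), the Schr\"odinger equation for $v$ reduces to
\begin{equation*}
(-\Delta - 2\zeta\cdot \nabla + q_{\tau}) R = -q_{\tau} \quad \text{in } \mathbb{R}^{n}.
\end{equation*}
The key analytic tool is the Faddeev Green operator $G_{\zeta}$, a right inverse of $-\Delta - 2\zeta\cdot\nabla$ defined by the Fourier multiplier $(|\xi|^{2}+2i\zeta\cdot\xi)^{-1}$, which obeys the Sylvester--Uhlmann weighted estimate
\begin{equation*}
\|G_{\zeta} f\|_{L^{2}_{\delta}(\mathbb{R}^{n})} \le \frac{C}{|\zeta|}\|f\|_{L^{2}_{\delta+1}(\mathbb{R}^{n})}, \qquad -1 < \delta < 0,
\end{equation*}
with $C$ independent of $\zeta$. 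Rewriting the equation for $R$ as the integral equation $R = -G_{\zeta}q_{\tau} - G_{\zeta}(q_{\tau}R)$ and noting that the map $R \mapsto -G_{\zeta}q_{\tau} - G_{\zeta}(q_{\tau}R)$ is a contraction on $L^{2}_{\delta}$ for $|\zeta|$ sufficiently large (since $q_{\tau}$ is compactly supported and bounded), the Banach fixed-point theorem yields a unique $R$ with $\|R\|_{L^{2}(\Omega)} \le C(\tau)/|\zeta|$. The bound $\|\nabla R\|_{L^{2}(\Omega)} \le C(\tau)$ and the $H^{s}$ estimate \eqref{Hs estimate on R} for $s>n/2$ then follow by differentiating the integral equation and bootstrapping through analogous weighted estimates on the derivatives of $G_{\zeta}$.

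The hard part will be the weighted $L^{2}$ resolvent bound for $G_{\zeta}$. It rests on a delicate Fourier-analytic study of $(|\xi|^{2}+2i\zeta\cdot\xi)^{-1}$ near the characteristic variety $\{|\xi|^{2}+2i\zeta\cdot\xi=0\}$, combined with Hardy-type inequalities on the weighted spaces $L^{2}_{\delta}(\mathbb{R}^{n})$. Once that estimate is in hand, the Liouville reduction, the factorization exploiting $\zeta\cdot\zeta = 0$, and the contraction mapping argument are essentially routine.
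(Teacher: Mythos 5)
Your outline is exactly the standard Sylvester--Uhlmann construction (Liouville substitution $u=\gamma^{-1/2}v$ reducing to a Schr\"odinger equation, conjugation by $e^{\zeta\cdot x}$ using $\zeta\cdot\zeta=0$, the Faddeev Green operator with the weighted $L^{2}_{\delta}$ resolvent estimate, and a contraction/Neumann-series argument for $R$), which is precisely what the paper invokes by citing Theorem 1.1 of Sylvester--Uhlmann rather than proving it; the paper's own remark on $\partial_\tau R$ carries out the same reduction to $-\Delta R\pm 2\zeta\cdot\nabla R+qR=-q$. Your sketch is correct (modulo the deferred, correctly attributed, weighted estimate for $G_\zeta$); the only point worth making explicit is that since $q_\tau$ contains $\tau^{2}/\gamma$, both the threshold for ``$|\zeta|$ large'' and all resulting constants depend on $\tau$, which is consistent with the $C(\tau)$ in the statement and harmless in the application where $\tau$ is fixed.
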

 Using the above theorem, we have for fixed $\tau$, the expressions for $\widehat{u_{1}^{f}}(\tau,x)$ and $\widehat{u_{1}^{g}}(\tau,x)$ given by 
 \begin{align}\label{CGO to widehat u}
 \left\{\begin{aligned}
 &\widehat{u_{1}^{f}}(x,\zeta_{1};\tau):= e^{\zeta_{1}\cdot x}m(x)(1+R_{1}(x,\zeta_{1},\tau)),\\
 &\widehat{u_{1}^{g}}(x,\zeta_{2},\tau):=e^{\zeta_{2}\cdot x}m(x)(1+R_{2}(x,\zeta_{2},\tau)),
 \end{aligned}
 \right.
 \end{align}
 where $m(x)=\gamma(x)^{-\frac{1}{2}}$ and $R_{i},\,i=1,2$ satisfy the following estimate
 \begin{align}\label{estimate on error term in CGO}
 \begin{aligned}
 \lVert R_{i}\rVert_{L^{2}(\Omega)}\leq \frac{C(\tau)}{\lvert\zeta\rvert},\ \lVert\nabla_{x}R_{i}\rVert_{L^{2}(\Omega)}	\leq C(\tau)\ \ \text{for i=1,\,2}
 \end{aligned}
 \end{align}
 with $C(\tau)$ likewise before.
 
 \medskip
 By direct computations we have
  	\begin{align*}
 	\begin{aligned}
 	&\widehat{\partial_{k}u_{1}^{f}}(\tau,x)= e^{\zeta_{1}\cdot x}\Big( \zeta_{1}^{k}m(x)+\partial_{k}m(x)+\zeta_{1}^{k}m(x)R_{1}(x,\zeta_{1};\tau)+\partial_{k}m(x)R_{1}(x,\zeta_{1};\tau)+m(x)\partial_{k}R_{1}(x,\zeta_{1},\tau)\Big),\\
 	&\widehat{\partial_{l}u_{1}^{g}}(\tau,x)= e^{\zeta_{2}\cdot x}\Big( \zeta_{2}^{l}m(x)+\partial_{l}m(x)+\zeta_{2}^{l}m(x)R_{2}(x,\zeta_{2};\tau)+\partial_{l}m(x)R_{2}(x,\zeta_{2};\tau)+m(x)\partial_{l}R_{2}(x,\zeta_{2},\tau)\Big),\\
 	& \widehat{\partial_{k}u_{1}^{f}}'(\tau,x)=e^{\zeta_{1}\cdot x}\Big(\zeta_{1}^{k}m(x)R_{1}'(x,\zeta_{1};\tau)+\partial_{k}m(x)R_{1}'(x,\zeta_{1};\tau)+m(x)\partial_{k}R_{1}'(x,\zeta_{1},\tau)\Big),\\
 	&\widehat{\partial_{l}u_{1}^{g}}'(\tau,x)=e^{\zeta_{2}\cdot x}\Big(\zeta_{2}^{l}m(x)R_{2}'(x,\zeta_{2};\tau)+\partial_{l}m(x)R_{2}'(x,\zeta_{2};\tau)+m(x)\partial_{l}R_{2}'(x,\zeta_{2},\tau)\Big).
 	\end{aligned}
 	\end{align*}

\medskip\noindent
	\begin{remark} Concerning the derivative of $R_i,\,i=1,2$ with respect to $\tau$, we can argue as follows.
		Let $\widehat{u}$ solve \eqref{Equation for widehat u}, now define $\widehat{v}$ by 
	$\widehat{v}(\tau,x):={\gamma^{-\frac{1}{2}}(x)}\widehat{u}(\tau,x)$, then $\widehat{v}(\tau,x)$ will satisfies 
\[
-\Delta \widehat{v}(\tau,x)+q(\tau,x)\widehat{v}(\tau,x)=0,\,\, x\in \Omega\]
where \[q(\tau,x):=\frac{-\Delta\sqrt{\gamma(x)}}{\sqrt{\gamma(x)}}+\frac{\tau^{2}}{\gamma(x)}.\]
Then it is well known from \cite{Sylvester Uhlmann} that above equation has CGO solutions taking the form
\[\widehat{v}(\tau,x)=e^{\zeta\cdot x}\lb 1+R(x,\zeta,\tau)\rb,\]
where $\zeta\cdot\zeta=0$ and $R(x,\zeta,\tau)$ will be a solution to 
\begin{align}\label{Equation for remainder term CGO}
-\Delta R(x,\zeta,\tau)+2\zeta\cdot\nabla_{x}R(x,\zeta,\tau)+q(\tau,x)R(x,\zeta,\tau)=-q(\tau,x)
\end{align}
and it satisfies the estimate 
\begin{align*}
\lVert R\rVert_{L^{2}(\Omega)}\leq \frac{C}{\lvert\zeta\rvert}\ \mbox{and} \ \lVert \nabla_{x}R\rVert_{L^{2}(\Omega)}\leq C,
\end{align*}
where the positive constant $C$ depends only on $q(\tau,x)$. Now differentiating \eqref{Equation for remainder term CGO} with respect to $\tau$, we have 
\begin{align}\label{Equation for tau derivative of remainder term}
-\Delta R'(x,\zeta,\tau)+2\zeta\cdot\nabla_{x}R'(x,\zeta,\tau)+q(\tau,x)R'(x,\zeta,\tau)=-q(\tau,x)-q'(\tau,x)R(x,\zeta,\tau):=F(\tau,\zeta,x),
\end{align}
where we have abused the notation $'$ to denote the derivative with respect to  $\tau$. Now since $F\in L^{2}(\Omega)$ and $\lVert F\rVert_{L^{2}(\Omega)}\leq C$ for some constant depending only on $q(\tau,x)$,  therefore $R'(x,\zeta,\tau)$ will have the same estimate as that of $R(x,\zeta,\tau)$. Hence we have the following estimate 
	\begin{align}\label{Estimate for tau derivative of error term in CGO}
	\lVert R'\rVert_{L^{2}(\Omega)}\leq \frac{C}{\lvert\zeta\rvert}\ \mbox{and} \ \lVert \nabla_{x}R'\rVert_{L^{2}(\Omega)}\leq C
		\end{align}
		with similar $C$ as before.
	\end{remark}

 Now we use the CGO solutions. Let $a\in\mathbb{R}^{n}$ and choose unit vectors $\xi,\eta\in\mathbb{R}^{n}$  such that 
 \begin{align*}
 a\cdot\xi=a\cdot\eta=\xi\cdot\eta=0.
 \end{align*}
 Further choose $r, s>0$ so that
 \begin{align*}
 r^{2}=\frac{|a|^{2}}{4}+s^{2}.
 \end{align*}
 Define $\zeta_1,\zeta_2\in\mathbb{C}^{n}$ by 
 \begin{align}\label{definition of zeta i}
 \zeta_{1}:= r\eta+i\lb \frac{a}{2}+s\xi\rb,\ \ \zeta_{2}:=-
 r\eta+i\lb \frac{a}{2}-s\xi\rb.
 \end{align}
 Then if we denote $\rho=\eta+i\xi$, we have 
 \begin{align*}
 \begin{aligned}
 \zeta_{i}\cdot\zeta_{i}=0,\ \ \lim_{s\rightarrow \infty}\frac{\zeta_{1}}{s}=\rho,\,\, \lim_{s\rightarrow \infty}\frac{\zeta_{2}}{s}=-\rho.
 \end{aligned}
 \end{align*}
 By using the expression for the solutions $\widehat{u_{1}^{f}}(\tau,x)$ and $\widehat{u_{1}^{g}}(\tau,x)$ of the form given in \eqref{CGO to widehat u} with values of $\zeta_{i}$ given by \eqref{definition of zeta i} in \eqref{Integral identity after using useful lemma} and dividing by $s^{2}$ and taking $s\rightarrow\infty$, we have  
 \begin{equation}\label{integral identity after using CGO}
 \int\limits_{\Omega}\sum_{j=1}^{n}\sum_{k,l=1}^{n}\rho_{k}\rho_{l}c_{kl}^{j}(x)m^{2}(x)\partial_{j}\widehat{w}(\tau,x)e^{ia\cdot x}dx=0,\,\, a\in\mathbb{R}^{n}.
 \end{equation}
 
 \medskip In deriving the above identity we have used the following remark.
\begin{remark}${}$
\par
Note that the last three terms in \eqref{Integral identity after using useful lemma} contains the terms like $F_{k}(s,x)$ and $G_{k}(s,x)$ which are given by 
		\[F_{k}(s,x)=s\partial_{k}u_{1}^{g}(s,x)\ \mbox{and} \ G_{k}(s,x)= s^{2}(\partial_{k}u_{1}^{g})'(s,x).\] 
Then we have \[\widehat{F_{k}}(\tau,x)=-\widehat{\lb \PD_{k}u_{1}^{g}\rb'}(\tau,x)\,\,\mbox{and}\,\, \widehat{G_{k}}(\tau,x)=\widehat{\lb \PD_{k}\lb u_{1}^{g}\rb'\rb}''(\tau,x).\]
Therefore, substituting these expressions in the second term of \eqref{Integral identity after using useful lemma}, we have 
	\begin{align*}
	\begin{aligned}
	&4\sum_{j=1}^{n}\sum_{k,l=1}^{n}\int\limits_{\Omega}c_{kl}^{j}(x)\left\{\widehat{(\partial_{k}u_{1}^{f})'}(\tau,x)\widehat{F_{l}}(\tau,x)+\widehat{F_{k}}\widehat{(\partial_{l}u_{1}^{f})'}(\tau,x)\right\}\partial_{j}\widehat{w}(\tau,x)\D x\\
	&\ \ \ \ =-4\sum_{j=1}^{n}\sum_{k,l=1}^{n}\int\limits_{\Omega}e^{(\zeta_{1}+\zeta_{2})\cdot x}c_{kl}^{j}(x)\Big\{\widehat{(\partial_{k}u_{1}^{f})'}(\tau,x)\widehat{(\partial_{l}u_{1}^{g})}'(\tau,x)+\widehat{(\partial_{l}u_{1}^{f})'}(\tau,x)\widehat{(\partial_{k}u_{1}^{g})}'(\tau,x)\Big\}\partial_{j}\widehat{w}(\tau,x)\D x.
	\end{aligned}
	\end{align*}	
	We will give the explanation for the first part, then the second part can be explained similarly. Denote $\zeta_i=(\zeta_i^1,\cdots,\zeta_i^n),\,i=1,2$. Then we have
	\begin{align*}
	\begin{aligned}
	M_{1}&:=-4\sum_{j=1}^{n}\sum_{k,l=1}^{n}\int\limits_{\Omega}
	e^{(\zeta_{1}+\zeta_{2})\cdot x}c_{kl}^{j}(x)\widehat{(\partial_{k}u_{1}^{f})'}(\tau,x)\widehat{(\partial_{l}u_{1}^{g})}'(\tau,x)\D x \\
	&=4\sum_{j=1}^{n}\sum_{k,l=1}^{n}\int\limits_{\Omega}c_{kl}^{j}(x)e^{(\zeta_{1}+\zeta_{2})\cdot x}\Big\{\zeta_{1}^{k}\zeta_{2}^{l}m^{2}(x)R_{1}'R_{2}'+\zeta_{2}^{l}m(x)\PD_{k}m(x)R_{2}'R_{1}'+\zeta_{2}^{l}m^{2}(x)\PD_{k}R_{1}'R_{2}'\\
	&\ \ \ \ \ \ \  \ \ \ \ \ \ \ \ \ \ \ \ +\zeta_{1}^{k}m(x)\PD_{l}m(x)R_{1}'R_{2}'+\PD_{k}m(x)\PD_{l}m(x)R_{1}'R_{2}'+m(x)\PD_{l}m(x)\PD_{k}R_{1}'R_{2}'\\
	&\ \ \ \ \ \ \ \ \ \ \ \ \ \ \ \ \ \ \ +m^{2}\zeta_{1}^{k}R_{1}'\PD_{l}R_{2}'+m(x)\PD_{k}m(x)R_{1}'\PD_{l}R_{2}'+m^{2}\PD_{k}R_{1}'\PD_{l}R_{2}' \Big\}\D x.
	\end{aligned}
	\end{align*}
	Now using the values of $\zeta_{i}$ and dividing by $s^{2}$ and  taking $s\rightarrow\infty$, we get  
	\[\lim_{s\rightarrow\infty}M_{1}=0\]
	where in deriving the above identity, we have used \eqref{Estimate for tau derivative of error term in CGO}.
	If we substitute the CGO in \eqref{Integral identity after using useful lemma} and divide by $s^{2}$, a similar argument shows that the last three terms in \eqref{Integral identity after using useful lemma} go to zero as $s\rightarrow\infty$. 
	
\end{remark} 
From \eqref{integral identity after using CGO}, we have
 \begin{align}\label{final identity}
 \sum_{j=1}^{n}\Bigg[\sum_{k,l=1}^{n}\rho_{k}\rho_{l}c_{kl}^{j}(x)m^{2}(x)\Bigg]\partial_{j}\widehat{w}(x)=0, \ \ x\in\Omega.
 \end{align}
 To prove that $c_{kl}^{j}(x)=0$ in $\Omega $, we will follow the argument given in \cite{KN} and make use of the following lemma similar to Lemma 3.1 of \cite{KN} in our setup.
 \begin{lemma}\label{Linearly independent solution to conductivity equation}
 	Suppose $n\geq 2$. For each fix $\tau$, there exist solutions $v_{j}(\tau,x)\in H^{2}(\Omega) $, $1\leq j\leq n$ of 
 	\begin{align*}
 	\tau^{2}v-\nabla_{x}\cdot(\gamma(x)\nabla_{x}v)=0,\ x\in\Omega
 	\end{align*}
 	such that 
 	\begin{align*}
 	{\rm det}\Big(\frac{\partial v_{j}}{\partial x_{i}}\Big)\neq 0, \ \text{a.e.} \ x\in\Omega.
 	\end{align*}
 	\begin{proof}
 		Consider the set $V$ defined by 
 		\begin{align}\label{definition of V}
 		V:=\{\rho\in\mathbb{C}^{n}: \rho\cdot\rho=0,\ |\rho|=\sqrt{2}\}. 
 		\end{align}
 		Let $\rho^{1},\rho^{2},\cdots,\rho^{n}\in V$ be n-linearly independent vectors over $\mathbb{C}$. Now let 
 		\begin{align}\label{definition of vj}
 		v_{j}(\tau,x):= m(x)e^{r\rho^{j}\cdot x}(1+R_{j}(x)), \ x\in\Omega,\, r>0, \ j=1,2, \cdots,n
 		\end{align}
 		as before. Then, we have 
 		\begin{align*}
 		\begin{aligned}
 		\text{det}
 		\begin{bmatrix}
 		\nabla_{x}v_{1}\\
 		\nabla_{x}v_{2}\\
 		\nabla_{x}v_{3}\\
 		\cdot\\
 		\cdot\\
 		\cdot\\
 		\nabla_{x}v_{n}
 		\end{bmatrix}
 		=r^{n}m^{n}(x)e^{r(\rho^{1}+\rho^{2}+\cdots+\rho^{n})\cdot x }\text{det}
 		\begin{bmatrix}
 		\rho^{1}(1+R_{1})+O(r^{-1})\\
 		\rho^{2}(1+R_{2})+O(r^{-1})\\
 		\rho^{3}(1+R_{3})+O(r^{-1})\\
 		\cdot\\
 		\cdot\\
 		\cdot\\
 		\rho^{n}(1+R_{n})+O(r^{-1})
 		\end{bmatrix}
 		\end{aligned}
 		\end{align*}
 		Now from \eqref{Hs estimate on R} using the Sobolev embedding, we have $||R_{j}||_{L^{\infty}(\Omega)}\leq \frac{C}{r}$ , we have 
 		\begin{align*}
 		\text{det}
 		\begin{bmatrix}
 		\nabla_{x}v_{1}\\
 		\nabla_{x}v_{2}\\
 		\nabla_{x}v_{3}\\
 		\cdot\\
 		\cdot\\
 		\cdot\\
 		\nabla_{x}v_{n}
 		\end{bmatrix}
 		\neq 0\,\,  \text{for sufficiently large}\ r.
 		\end{align*}
 	\end{proof}
 \end{lemma}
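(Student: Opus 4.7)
The plan is to construct the $v_j$ as Sylvester--Uhlmann CGO solutions of the form $v_j(\tau,x)=m(x)e^{r\rho^{j}\cdot x}(1+R_j(x,r\rho^j,\tau))$ with $m(x)=\gamma^{-1/2}(x)$, for a suitably chosen family $\rho^1,\dots,\rho^n$ of $\mathbb{C}$-linearly independent vectors in the isotropic set $V=\{\rho\in\mathbb{C}^n:\rho\cdot\rho=0,\ |\rho|=\sqrt{2}\}$. Such a family exists because the complex quadric $\{\rho\cdot\rho=0\}\subset\mathbb{C}^n$ is not contained in any proper linear subspace of $\mathbb{C}^n$, hence it spans $\mathbb{C}^n$, and normalizing to $|\rho|=\sqrt{2}$ preserves this. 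For instance, in dimension $n=3$ one may take $\rho^1=(1,i,0)$, $\rho^2=(0,1,i)$, $\rho^3=(i,0,1)$, whose determinant is $1-i\neq 0$; analogous constructions work for $n\geq 2$.

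Next, for fixed $\tau$ and $r\gg 1$, the theorem of Sylvester--Uhlmann quoted earlier in this section produces the solutions $v_j\in H^2(\Omega)$ of $\tau^2 v-\nabla_x\cdot(\gamma\nabla_x v)=0$ with $R_j$ satisfying $\|R_j\|_{H^s(\Omega)}\leq C/r$ for some $s>n/2$, and hence $\|R_j\|_{L^\infty(\Omega)}\leq C/r$ by Sobolev embedding, together with $\|\nabla_x R_j\|_{L^2(\Omega)}\leq C$. A direct computation gives
\begin{equation*}
\partial_i v_j=e^{r\rho^{j}\cdot x}\bigl[r\rho^{j}_i\, m(x)(1+R_j)+\partial_i m(x)(1+R_j)+m(x)\partial_i R_j\bigr].
\end{equation*}

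Then I factor the common scalar $r\,m(x)\,e^{r\rho^{j}\cdot x}$ out of the $j$-th row of the Jacobian matrix $\bigl(\partial v_j/\partial x_i\bigr)_{i,j}$, yielding
\begin{equation*}
\det\!\left(\tfrac{\partial v_j}{\partial x_i}\right)=r^n m(x)^n e^{r(\rho^1+\cdots+\rho^n)\cdot x}\,\det\!\Bigl[\rho^{j}_i(1+R_j)+r^{-1}\bigl(m(x)^{-1}\partial_i m(x)(1+R_j)+\partial_i R_j\bigr)\Bigr]_{i,j}.
\end{equation*}
Expanding the inner determinant as a multilinear polynomial in the rows, the leading term is $\det[\rho^{j}_i]_{i,j}$, which is a nonzero constant by our choice of the $\rho^j$; every other term is a product that contains at least one entry with an $r^{-1}$ factor multiplying either $L^\infty$-small $R_j$ or the $L^2$-bounded $\partial_i R_j$. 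Hence the inner determinant converges in $L^1(\Omega)$ (in fact $L^2(\Omega)$ using Cauchy--Schwarz on the cross terms involving $\partial_i R_j$) to the nonzero constant $\det[\rho^{j}_i]$ as $r\to\infty$. Combined with the strict positivity of $m(x)$ and the fact that $e^{r(\rho^1+\cdots+\rho^n)\cdot x}$ is never zero, the determinant is nonzero a.e.\ for all sufficiently large $r$.

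The main obstacle is handling the derivative $\partial_i R_j$, which is controlled only in $L^2(\Omega)$ rather than $L^\infty(\Omega)$. This is precisely the reason the conclusion is ``a.e.'' rather than pointwise: the $L^2$ correction terms can be absorbed in the determinant expansion without spoiling nonvanishing on a set of full measure. The argument then parallels Lemma~3.1 of \cite{KN} after the additional zero-order term $\tau^2 v$ is absorbed into the Schrödinger reduction $(-\Delta+q(\tau,x))\widehat v=0$ already used in the Remark above.
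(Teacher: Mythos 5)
Your proposal follows essentially the same route as the paper: the same set $V$ of isotropic vectors, the same choice of $n$ linearly independent $\rho^j\in V$, the same Sylvester--Uhlmann CGO ansatz $v_j=m\,e^{r\rho^j\cdot x}(1+R_j)$, the same factorization of $r^n m^n e^{r(\rho^1+\cdots+\rho^n)\cdot x}$ out of the Jacobian determinant, and the same conclusion via $\|R_j\|_{L^\infty}\le C/r$ for large $r$. The only difference is that you spell out the existence of the independent $\rho^j$ and are more explicit about the rows' $O(r^{-1})$ corrections containing the merely $L^2$-controlled $\partial_i R_j$, which the paper absorbs without comment; this is a welcome clarification but not a different proof.
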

 Using lemma \ref{Linearly independent solution to conductivity equation}, there exist solutions $\{\widehat{w}_{j}\}_{1\leq j\leq n}$ of \eqref{equation for widehat u} in such way that $\{\nabla_{x}\widehat{w}_{j}\}_{1\leq j\leq n}$  are linearily independent.  Taking these choices of $\{\widehat{w}_{j}\}_{1\leq j\leq n}$ in \eqref{final identity}, we have
 \begin{align}\label{vanishing of rhok rhol cjkl}
 \sum_{k,l=1}^{n}\rho_{k}\rho_{l}c_{kl}^{j}(x)=0,\ x\in\Omega,\,\, j=1,2,\cdots,n.
 \end{align}
 Now since \eqref{vanishing of rhok rhol cjkl} holds for all $\rho\in V$, where $V$ is defined as in \eqref{definition of V}. Now using $\rho=ze_{k}+\sqrt{-1}z e_{l}$ for $z\in {\mathbb R}$ with $|z|=1$ and $k\neq l$, where $e_{j}'s$ for $1\leq j\leq n$ are standard basis for $\mathbb{R}^{n}$, in \eqref{vanishing of rhok rhol cjkl}, we have 
 \begin{align*}
 z^{2}c_{kk}^{j}(x)-z^{2}c_{ll}^{j}(x)+iz^{2}c_{kl}^{j}(x)=0,\,\, x\in\Omega,\,\,k\neq l.
 \end{align*}
 From here, we will have 
 \begin{align}\label{vanishing og cjkl and eequality of cjkk and cjll  for k and l are not same}
 c_{kk}^{j}(x)=c_{ll}^{j}(x),\ c_{kl}^{j}(x)=0 \ \text{for all $j$ and $k\neq l$}.
 \end{align}
 Now using \eqref{vanishing og cjkl and eequality of cjkk and cjll  for k and l are not same} in \eqref{integral identity}, we have 
 \begin{align}\label{Integral identity for cjkk}
 \sum_{j=1}^{n}\sum_{k=1}^{n}\int\limits_{\Omega}c_{kk}^{j}(x)\partial_{k}\widehat{u_{1}^{f}}(\tau,x)\partial_{k}\widehat{u_{1}^{g}}(\tau,x)\partial_{j}\widehat{w}(\tau,x)dx=0
 \end{align}
 holds for all $\widehat{u_{1}^{f}},\widehat{u_{1}^{g}}$ and $\widehat{w}$ satisfying \eqref{equation for widehat u}. Now using CGO solutions for $\widehat{u_{1}^{g}}$ and  $\widehat{w}$, we have 
 \begin{align}\label{prefinal equation}
 \sum_{j=1}^{n}\sum_{k=1}^{n}\rho_{j}\rho_{k}c^{j}_{kk}(x)\partial_{k}\widehat{u_{1}^{f}}(\tau,x)=0.
 \end{align}
 Note that \eqref{prefinal equation} holds for any $f\in C_0^\infty({\mathbb R}\times \partial\Omega)$. Hence by taking $n$ different $f$ and using Lemma \ref{Linearly independent solution to conductivity equation}, we have
 \begin{align*}
 \sum_{j=1}^{n}\sum_{k=1}^{n}\rho_{j}\rho_{k}c^{j}_{kk}(x)=0.
 \end{align*}
 Now take same $\rho\in V$ as defined before, we have 
 \begin{align*}
 c^{j}_{kk}(x)=0\,\,\text{for all} \ 1\leq j,k\leq n \ \text{and} \ x\in\Omega.
 \end{align*}
 Hence combining this and \eqref{vanishing og cjkl and eequality of cjkk and cjll  for k and l are not same}, we have 
 \begin{align*}
 c_{kl}^{j}(x)=0\,\, \text{in}\ \Omega\,\, \text{for all}\,\,1\leq j,k,l\leq n.
 \end{align*}
 This completes the proof.

 \bigskip
 \noindent
 {\bf \large Acknowledgement}${}$
 \par
 First of all we thank the anonymous referees' 
 useful comments which helped us to improve this paper. We also thank the several research funds which supported this study and they are as follows. The first author was partially supported by Grant-in-Aid for Scientific Research (15K21766) of the Japan Society for the Promotion of Science doing the research of this paper. The second author would like to thank his advisor Venky Krishnan for stimulating discussions. The second author was partially supported by Grant-in-Aid for Scientific Research (15H05740) of the Japan Society for the Promotion of Science doing the research of this paper. He also benefited from the support of Airbus Group Corporate Foundation Chair ``Mathematics of Complex Systems'' established at TIFR Centre for Applicable Mathematics  and  TIFR  International  Centre  for  Theoretical  Sciences,  Bangalore, India. 
 
 \bigskip
 \noindent
\appendix
 \section{}\label{subsection A}
 \setcounter{equation}{0}
 \renewcommand{\theequation}{A.\arabic{equation}}
 
 We will give here some argument for justifying the $\epsilon$-expansion which also gives some argument for the unique solvability of \eqref{equation of interest}.
We look for a solution $u(t,x)$ to \eqref{equation of interest} of the form
 \begin{equation}\label{form of u}
 \begin{aligned}
 u(t,x):=\epsilon\left\{u_{1}(t,x)+\epsilon\left(u_{2}(t,x)+w(t,x)\right)\right\},
 \end{aligned}
 \end{equation}
 where $u_1$, $u_2$ are the solutions to the intial boundary value problems \eqref{equation for u1 i} and \eqref{equation for u2 i}, respectively.
 Then, $w(t,x)$ has to satisfy
 \begin{align*}
 \begin{aligned}
 \begin{cases}
 &\partial_{t}^{2}w(t,x)-\nabla_{x}\left(\gamma(x)\nabla_{x}w(t,x)\right)=\epsilon^{-2}\nabla_{x}\cdot R(x,\epsilon\nabla_{x}u_{1}+\epsilon^{2}\nabla_{x}u_{2}+\epsilon^{2}\nabla_{x}w)\\
 &+\epsilon\sum_{j=1}^{n}\partial_{j}\left(\sum_{k,l=1}^{n}c_{kl}^{j}(x)\left(\partial_{k}u_{1}\partial_{l}u_{2}+\partial_{l}u_{1}\partial_{k}u_{2}\right)(t,x)\right)\\
 &+\epsilon\sum_{j=1}^{n}\partial_{j}\left(\sum_{k,l=1}^{n}c_{kl}^{j}(x)\left(\partial_{k}u_{1}\partial_{l}w+\partial_{l}u_{1}\partial_{k}w\right)(t,x)\right)\\
 &+\epsilon^{2}\sum_{j=1}^{n}\partial_{j}\left(\sum_{k,l=1}^{n}c_{kl}^{j}(x)\left(\partial_{k}u_{2}\partial_{l}u_{2}+\partial_{k}u_{2}\partial_{l}w+\partial_{l}u_{2}\partial_{k}w+\partial_{k}w\partial_{l}w\right)(t,x)\right)\\
 &\qquad\qquad\qquad\qquad\qquad\qquad\qquad\qquad\qquad\qquad\qquad\qquad\qquad\qquad\qquad\,\, \ \text{in}\  Q_{T},\\
 &w(0,x)=\partial_{t}w(0,x)=0,\ x\in\Omega \ \text{and}\ w|_{\partial Q_{T}}=0.
 \end{cases}
 \end{aligned}
 \end{align*}
 By the mean value theorem, we have
 \begin{align*}
 R(x,\epsilon\nabla_{x}u_{1}+\epsilon^{2}\nabla_{x}u_{2}+\epsilon^{2}\nabla_{x}w)&=R(x,\epsilon\nabla_{x}u_{1}+\epsilon^{2}\nabla_{x}u_{2})\\
 &+\int\limits_{0}^{1}\frac{d}{d\theta}R(x,\epsilon\nabla_{x}u_{1}+\epsilon^{2}\nabla_{x}u_{2}+\theta\epsilon^{2}\nabla_{x}w)d\theta\\
 &=R(x,\epsilon\nabla_{x}u_{1}+\epsilon^{2}\nabla_{x}u_{2})+\epsilon^{3} K(x,\epsilon\nabla_{x}w;\epsilon)\nabla_{x}w
 \end{align*}
 where 
 \begin{align*}
 \epsilon K(x,\epsilon\nabla_{x}w;\epsilon):=\int\limits_{0}^{1}D_{q}R(x,\epsilon\nabla_{x}u_{1}+\epsilon^{2}\nabla_{x}u_{2}+\theta\epsilon^{2}\nabla_{x}w)d\theta
 \end{align*}
 where $D_{q}R(x,q)=\left(\left(\partial_{q_{i}}R_{j}\right)\right)_{1\leq i,j\leq n}$ and $K_{ij}=\partial_{q_{i}}R_{j}$. After using this in previous equation, we get
 \begin{align*}
 \begin{aligned}
 \begin{cases}
 &\partial_{t}^{2}w(t,x)-\nabla_{x}\left(\gamma(x)\nabla_{x}w(t,x)\right) =\epsilon\sum_{j=1}^{n}\partial_{j}\left(\sum_{k,l=1}^{n}c_{kl}^{j}(x)\left(\partial_{k}u_{1}\partial_{l}u_{2}
 +\partial_{l}u_{1}\partial_{k}u_{2}\right)\right)\\
 &+\epsilon\sum_{j=1}^{n}\sum_{k,l=1}^{n}\partial_{j}\left(c_{kl}^{j}(x)\partial_{k}u_{1}\right)\partial_{l}w
 +\epsilon\sum_{j=1}^{n}\sum_{k,l=1}^{n}\left(c_{kl}^{j}(x)\partial_{k}u_{1}\right)\partial_{jl}^{2}w\\
 &+\epsilon\sum_{j=1}^{n}\sum_{k,l=1}^{n}\partial_{j}\left(c_{kl}^{j}(x)\partial_{l}u_{1}\right)\partial_{k}w+\epsilon\sum_{j=1}^{n}\sum_{k,l=1}^{n}\left(c_{kl}^{j}(x)\partial_{k}u_{1}\right)\partial_{jk}^{2}w\\
 &+\epsilon^{2}\sum_{j=1}^{n}\partial_{j}\left(\sum_{k,l=1}^{n}c_{kl}^{j}(x)\partial_{k}u_{2}\partial_{l}u_{2}\right)+\epsilon^{2}\sum_{j=1}^{n}\sum_{k,l=1}^{n}\partial_{j}\left(c_{kl}^{j}(x)\partial_{k}u_{2}\right)\partial_{l}w\\
 &+\epsilon^{2}\sum_{j=1}^{n}\sum_{k,l=1}^{n}\left(c_{kl}^{j}(x)\partial_{k}u_{2}\right)\partial_{jl}^{2}w
 +\epsilon^{2}\sum_{j=1}^{n}\sum_{k,l=1}^{n}\partial_{j}\left(c_{kl}^{j}(x)\partial_{l}u_{2}\right)\partial_{k}w\\
 &+\epsilon^{2}\sum_{j=1}^{n}\sum_{k,l=1}^{n}\left(c_{kl}^{j}(x)\partial_{k}u_{2}\right)\partial_{jk}^{2}w
 +\epsilon^{2}\sum_{j=1}^{n}\sum_{k,l=1}^{n}c_{kl}^{j}(x)\left(\partial_{jk}^{2}w\partial_{l}w+\partial_{k}w\partial_{jl}^{2}w\right)\\
 &+\epsilon^{2}\sum_{j=1}^{n}\sum_{k,l=1}^{n}\partial_{j}c_{kl}^{j}(x)\partial_{k}w\partial_{l}w+\epsilon\sum_{i=1}^{n}\sum_{j=1}^{n}\partial_{i}K_{ij}(x,\epsilon\nabla_{x}w;\epsilon)\partial_{j}w\\
 &+\epsilon\sum_{i=1}^{n}\sum_{j=1}^{n}K_{ij}(x,\epsilon\nabla_{x}w;\epsilon)\partial^{2}_{ij}w+\epsilon^{-2}\nabla_{x}\cdot R(x,\epsilon\nabla_{x}u_{1}+\epsilon\nabla_{x}u_{2})\\
 &\qquad\qquad\qquad\qquad\qquad\qquad\qquad\qquad\qquad\qquad\qquad\qquad\qquad\qquad\qquad\qquad\,\, \ \text{in}\  Q_{T},\\
 &w(0,x)=\partial_{t}w(0,x)=0,\ x\in\Omega \ \text{and}\ w|_{\partial Q_{T}}=0.
 \end{cases}
 \end{aligned}
 \end{align*}
 Thus, finally we have derived the following initial boundary value problem for $w$:
 \begin{align}\label{equation for w}
 \begin{aligned}
 \begin{cases}
 &\partial_{t}^{2}w(t,x)-\nabla_{x}\cdot\left(\gamma(x)\nabla_{x}w(t,x)\right)=\epsilon\sum_{j=1}^{n}\partial_{j}\left(\sum_{k,l=1}^{n}c_{kl}^{j}(x)\left(\partial_{k}u_{1}\partial_{l}u_{2}+\partial_{l}u_{1}\partial_{k}u_{2}\right)(t,x)\right)\\
 &+\epsilon\sum_{j=1}^{n}\sum_{k,l=1}^{n}\partial_{j}\left(c_{kl}^{j}(x)\partial_{k}u_{1}\right)\partial_{l}w
 +\epsilon\sum_{j=1}^{n}\sum_{k,l=1}^{n}\left(c_{kl}^{j}(x)\partial_{k}u_{1}\right)\partial_{jl}^{2}w\\
 &+\epsilon\sum_{j=1}^{n}\sum_{k,l=1}^{n}\partial_{j}\left(c_{kl}^{j}(x)\partial_{l}u_{1}\right)\partial_{k}w
 +\epsilon\sum_{j=1}^{n}\sum_{k,l=1}^{n}\left(c_{kl}^{j}(x)\partial_{l}u_{1}\right)\partial_{jk}^{2}w\\
 &+\epsilon^{2}\sum_{j=1}^{n}\partial_{j}\left(\sum_{k,l=1}^{n}c_{kl}^{j}(x)\partial_{k}u_{2}\partial_{l}u_{2}\right)+\epsilon^{2}\sum_{j=1}^{n}\sum_{k,l=1}^{n}\partial_{j}\left(c_{kl}^{j}(x)\partial_{k}u_{2}\right)\partial_{l}w\\
 &+\epsilon^{2}\sum_{j=1}^{n}\sum_{k,l=1}^{n}\left(c_{kl}^{j}(x)\partial_{k}u_{2}\right)\partial_{jl}^{2}w
 +\epsilon^{2}\sum_{j=1}^{n}\sum_{k,l=1}^{n}\partial_{j}\left(c_{kl}^{j}(x)\partial_{l}u_{2}\right)\partial_{k}w\\
 &+\epsilon^{2}\sum_{j=1}^{n}\sum_{k,l=1}^{n}\left(c_{kl}^{j}(x)\partial_{k}u_{2}\right)\partial_{jk}^{2}w
 +\epsilon^{2}\sum_{j=1}^{n}\sum_{k,l=1}^{n}c_{kl}^{j}(x)\left(\partial_{jk}^{2}w\partial_{l}w+\partial_{k}w\partial_{jl}^{2}w\right)\\
 &+\epsilon^{2}\sum_{j=1}^{n}\sum_{k,l=1}^{n}\partial_{j}c_{kl}^{j}(x)\partial_{k}w\partial_{l}w+\epsilon^{-2}\nabla_{x}\cdot R(x,\epsilon\nabla_{x}u_{1}+\epsilon\nabla_{x}u_{2})\\
 &+\epsilon\sum_{i=1}^{n}\sum_{j=1}^{n}\partial_{i}K_{ij}(x,\epsilon\nabla_{x}w;\epsilon)\partial_{j}w+\epsilon\sum_{i=1}^{n}\sum_{j=1}^{n}K_{ij}(x,\epsilon\nabla_{x}w;\epsilon)\partial^{2}_{ij}w\\
 &\qquad\qquad\qquad\qquad\qquad\qquad\qquad\qquad\qquad\qquad\qquad\qquad\qquad\qquad\qquad\qquad\qquad\,\,\,\, \ \text{in}\  Q_{T},\\
 &w(0,x)=\partial_{t}w(0,x)=0,\ x\in\Omega \ \text{and}\ w|_{\partial Q_{T}}=0.
 \end{cases}
 \end{aligned}
 \end{align}
 In order to simplify the description of the above equation for $w$, let us introduce the following notations: 
 \begin{align*}
 \begin{aligned}
 \begin{cases}
 &A(w(t))w=\nabla_{x}\cdot\left(\gamma(x)\nabla_{x}w(t,x)\right)+\epsilon\Gamma(x,\nabla_{x}w;\epsilon)\cdot \partial_x^{2}w,\\
 &\epsilon\Gamma(x,\nabla_{x}w;\epsilon):=\epsilon\left(\sum_{k=1}^{n}c_{kl}^{j}(x)\partial_{k}u_{1}\right)_{1\leq j,l\leq n}+\epsilon\left(\sum_{l=1}^{n}c_{kl}^{j}(x)\partial_{l}u_{1}\right)_{1\leq j,k\leq n}\\
 &+\epsilon^{2}\left(\sum_{k=1}^{n}c_{kl}^{j}(x)\partial_{k}u_{2}\right)_{1\leq j,l\leq n}
 +\epsilon^{2}\left(\sum_{l=1}^{n}c_{kl}^{j}(x)\partial_{l}u_{2}\right)_{1\leq j,k\leq n}\\
 &+\epsilon^{2}\left(\sum_{k=1}^{n}c_{kl}^{j}(x)\partial_{k}w\right)_{1\leq j,l\leq n}+\epsilon^{2}\left(\sum_{l=1}^{n}c_{kl}^{j}(x)\partial_{l}w\right)_{1\leq j,k\leq n}+\Bigg(K_{ij}(x,\epsilon\nabla_{x}w;\epsilon)\Bigg)_{1\leq i,j\leq n},\\
 &\epsilon\vec{G}(x,\nabla_{x}w;\epsilon):=\epsilon\left(\sum_{j=1}^{n}\sum_{k=1}^{n}\partial_{j}\left(c_{kl}^{j}(x)\partial_{k}u_{1}\right)\right)_{1\leq l\leq n}\\
 &+\epsilon\left(\sum_{j=1}^{n}\sum_{l=1}^{n}\partial_{j}\left(c_{kl}^{j}(x)\partial_{k}u_{1}\right)\right)_{1\leq k\leq n} +\epsilon^{2}\left(\sum_{j=1}^{n}\sum_{k=1}^{n}\partial_{j}\left(c_{kl}^{j}(x)\partial_{k}u_{2}\right)\right)_{1\leq l\leq n}\\
 &+\epsilon^{2}\left(\sum_{j=1}^{n}\sum_{l=1}^{n}\partial_{j}\left(c_{kl}^{j}(x)\partial_{k}u_{2}\right)\right)_{1\leq k\leq n}+\epsilon^{2}\left(\sum_{j=1}^{n}\sum_{k=1}^{n}\partial_{j}c_{kl}^{j}(x)\partial_{k}w\right)_{1\leq l\leq n}\\
 &+\epsilon\Bigg(\sum_{i=1}^{n}\partial_{i}K_{ij}(x,\epsilon\nabla_{x}w;\epsilon)\Bigg)_{1\leq j\leq n},\\
 &\epsilon F(x,\nabla_{x},\nabla_{x}u_{2};\epsilon):=\epsilon\sum_{j=1}^{n}\partial_{j}\left(\sum_{k,l=1}^{n}c_{kl}^{j}(x)\left(\partial_{k}u_{1}\partial_{l}u_{2}+\partial_{l}u_{1}\partial_{k}u_{2}\right)\right)\\
 &+\epsilon^{2}\sum_{j=1}^{n}\partial_{j}\left(\sum_{k,l=1}^{n}c_{kl}^{j}(x)\partial_{k}u_{2}\partial_{l}u_{2}\right)+\epsilon^{-2}\nabla_{x}\cdot R(x,\epsilon\nabla_{x}u_{1}+\epsilon^{2}\nabla_{x}u_{2}).
 \end{cases}
 \end{aligned}
 \end{align*}
 Here $\partial_x^2 w=(\partial_{ij} w)_{1\le i,j\le n}$ and $\Gamma(x,\nabla_{x}w;\epsilon)\cdot \partial_x^{2}w$ denotes the real inner product of $\Gamma(x,\nabla_{x}w;\epsilon)$ and $\partial_x^{2}w$. Using the above notations, \eqref{equation for w} becomes 
 \begin{align}\label{equation for w in final form}
 \begin{aligned}
 \begin{cases}
 &\partial_{t}^{2}w-A(w(t))w-\epsilon \vec{G}(x,\epsilon\nabla_{x}w;\epsilon)\cdot\nabla_{x}w=\epsilon F(x,\nabla_{x},\nabla_{x}u_{2};\epsilon)\,\,\text{in $Q_T$},\\
 &w(0,x)=\partial_{t}w(0,x)=0,\ x\in\Omega \ \text{and}\ w|_{\partial Q_{T}}=0.
 \end{cases}
 \end{aligned}
 \end{align}
 
 The justification of $\epsilon$-expansion is given as the following theorem.
 \begin{theorem}\label{unique solvability for w}
 	Let $m\ge [n/2]+3$ and $f\in B_M$. Then, there exists $\epsilon_0=\epsilon_0(h,T, m, M)>0$ and $w=w(t,x;\epsilon)\in X_m$ for $0<\epsilon<\epsilon_0$ such that each $w=w(\cdot,\cdot;\epsilon)$ is the unique solution to the initial boundary value problem
 	\eqref{equation for w in final form} with the estimate
 	\begin{equation}\label{estimate of w}
 	|||w|||_m:=\left\{\sum_{j=0}^m\sum_{k=0}^{m-j} \displaystyle\sup_{t\in[0,T]}\Vert\partial_t^k w(t,\cdot;\epsilon)\Vert_{H^{m-j-k}(\Omega)}^2\right\}^{1/2}=O(\epsilon)\,\,\text{\rm as $\epsilon\rightarrow 0$}.
 	\end{equation}
 	Here $h$ and $B_M$, $X_m$ were defined in Section 1 right after \eqref{definition of P(x,q)} and 
 	the paragraph after \eqref{estimate of R}, respectively.
 \end{theorem}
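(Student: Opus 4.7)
The plan is to realize $w$ as the fixed point of a Picard iteration that decouples the quasilinearity in \eqref{equation for w in final form}. Starting from $w^{(0)}\equiv 0$, define $w^{(N+1)}\in X_m$ inductively as the unique solution of the \emph{linear} hyperbolic initial boundary value problem obtained from \eqref{equation for w in final form} by freezing $w=w^{(N)}$ inside the coefficients $A(\cdot)$ and $\vec{G}(\cdot)$, while keeping the source $\epsilon F(x,\nabla_x u_1,\nabla_x u_2;\epsilon)$ unchanged. The strategy is to produce a constant $K=K(h,T,m,M)>0$ such that for $0<\epsilon<\epsilon_0$ small the map $w^{(N)}\mapsto w^{(N+1)}$ stabilizes the ball $B_\epsilon:=\{v\in X_m:\,|||v|||_m\le K\epsilon\}$ and contracts it in the weaker norm $|||\cdot|||_{m-1}$. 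Because $u_1,u_2\in C^\infty([0,T]\times\overline\Omega)$ are already delivered by Theorem 2.45 of \cite{KKL}, and because the principal part $\gamma(x)\delta_{ij}+\epsilon\Gamma_{ij}(x,\nabla_x w^{(N)};\epsilon)$ stays uniformly strongly elliptic once $\epsilon$ is small and $w^{(N)}\in B_\epsilon$, each linearized problem is well posed in $X_m$.

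The analytic workhorse is the standard tame energy estimate for linear second-order hyperbolic equations with $X_m$-coefficients: if $\mathcal{A}(t)$ is symmetric and uniformly strongly elliptic with $\mathcal{A},\vec{b}\in X_m$, the unique solution $v\in X_m$ of $\partial_t^2 v-\mathcal{A}(t)v-\vec{b}\cdot\nabla_x v=S$ with zero Cauchy and Dirichlet data satisfies
\begin{equation*}
|||v|||_m\le C\bigl(1+|||\mathcal{A}|||_m+|||\vec{b}|||_m\bigr)^{m}\int_0^T\Vert S(\sigma,\cdot)\Vert_{H^{m-1}(\Omega)}\,d\sigma,
\end{equation*}
with $C=C(\gamma,\Omega)$. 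The threshold $m\ge[n/2]+3$ enters precisely here: it guarantees that $H^{m-1}$ is a Banach algebra that embeds into $C^1(\overline\Omega)$, which is what is needed both to handle the products generated by the quasilinear coupling and to control the Moser-type commutators $[\partial_x^\alpha,A(w^{(N)})]w^{(N+1)}$ produced when one differentiates the equation up to order $m-1$.

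To close the stability step one checks $\int_0^T\Vert\epsilon F(\sigma,\cdot)\Vert_{H^{m-1}(\Omega)}\,d\sigma=O(\epsilon)$. The bilinear pieces $\epsilon\sum_j\partial_j(c_{kl}^j(\partial_k u_1\partial_l u_2+\partial_l u_1\partial_k u_2))$ and $\epsilon^2\sum_j\partial_j(c_{kl}^j\partial_k u_2\partial_l u_2)$ are manifestly $O(\epsilon)$ since $u_1,u_2$ are $\epsilon$-independent smooth functions and each $c_{kl}^j\in C_0^\infty(\Omega)$. The sensitive contribution is the cubic remainder $\epsilon^{-2}\nabla_x\cdot\vec{R}(x,\epsilon\nabla_x u_1+\epsilon^2\nabla_x u_2)$; by \eqref{estimate of R} and the chain rule one has $\Vert\vec{R}(\cdot,\epsilon\nabla_x u_1+\epsilon^2\nabla_x u_2)\Vert_{H^m(\Omega)}=O(\epsilon^3)$, so after dividing by $\epsilon^2$ this term is $O(\epsilon)$ in $H^{m-1}$. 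Combining this with the smallness of $\epsilon\Gamma,\epsilon\vec{G}$ on $B_\epsilon$, the tame estimate yields $|||w^{(N+1)}|||_m\le C_0\epsilon+C_1\epsilon\,|||w^{(N)}|||_m$, which is at most $K\epsilon$ for $K>2C_0$ and $\epsilon_0 K\le 1/(2C_1)$. The contraction $|||w^{(N+1)}-w^{(N)}|||_{m-1}\le C\epsilon\,|||w^{(N)}-w^{(N-1)}|||_{m-1}$ follows from an analogous energy estimate in the lower-regularity norm applied to the difference, after which a standard weak-limit plus interpolation argument along the lines of \cite{Dafermos,NW} produces the fixed point $w\in X_m$ satisfying \eqref{estimate of w}.

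The main obstacle is the quasilinear commutator bookkeeping: because the principal coefficient $\gamma(x)+\epsilon\Gamma(x,\nabla_x w^{(N)};\epsilon)$ depends on $\nabla_x w^{(N)}$ itself, the Moser tame estimates for $[\partial_x^\alpha,A(w^{(N)})]w^{(N+1)}$ must be balanced against the positivity of the quadratic form that drives the energy identity, and this positivity only survives the perturbation by $\epsilon\Gamma$ when $\epsilon$ is uniformly small over all iterates. The index condition $m\ge[n/2]+3$ is sharp for this to work; once it is in force, smallness of $\epsilon_0$ absorbs the quasilinear error into the dissipation of the energy and the iteration closes in $B_\epsilon$, yielding the desired $w$ with $|||w|||_m=O(\epsilon)$.
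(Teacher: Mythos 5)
Your proposal is correct and follows essentially the same route as the paper: having derived \eqref{equation for w in final form}, the paper simply defers the proof to the Dafermos--Hrusa-type linearized iteration and energy method of \cite{NW} for the case $n=1$, which is precisely the Picard scheme with frozen coefficients, tame energy estimates under $m\ge[n/2]+3$, stability of the ball $|||w|||_m\le K\epsilon$, and contraction in the lower-order norm that you spell out. Your sketch in fact supplies more detail than the paper itself (which gives no argument beyond the reference), the only loose ends being the usual bookkeeping of time-derivative norms of the source and the compatibility conditions at $t=0$, and the unsubstantiated side remark that the index condition is sharp.
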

 
 Note that once we have Theorem \ref{unique solvability for w}, we also have the unique solvability for the initial boundary value problem \eqref{equation of interest}. The proof of Theorem \ref{unique solvability for w} can be given along the same line as the proof for the case $n=1$ which is
 given in \cite{NW}. In order to apply the same proof given in \cite{NW} for our case,
 we used the same notations as in \cite{NW} for \eqref{equation for w in final form}.
 \section{}
 \setcounter{equation}{0}
 \renewcommand{\theequation}{B.\arabic{equation}}
 
 We will give here some asymptotic property of solution $\widehat{u_{1}^{f}}(\tau,x)$. We only need this to say that the differentiation of this solution with respect to each $x_j$ gives an effect to its asymptotic property up to multiplication by $O(|\tau|)$. We refer this as {\it our task}. Although there could be a more simpler way to show this, we chose as a tool to show this the theory of pseudodifferential operators with large parameter which is equivalent to the semiclassical analysis. We haven't seen any complete study of semiclassical analysis for the Poisson operator for $\tau^2\bullet-\nabla\cdot(\gamma\nabla\bullet)$
 which could immediately give what we need. Clearly it is enough to estimate the solution $v=v(\tau)$ of
 \begin{equation}\label{equation of v}
 \left\{
 \begin{array}{ll}
 \tau^2 v-\nabla\cdot(\gamma\nabla v)=0\,&\,\text{\rm in}\,\,\Omega,\\
 v=\phi\in C^\infty(\partial\Omega)\,&\,\text{\rm on}\,\,\partial\Omega,
 \end{array}
 \right.
 \end{equation}
 By a partition of unity and the well-posedness of \eqref{equation of v}, we will see
 it is enough to analyze the asymptotic behavior of $v(\tau)$ near $\partial\Omega$. Let $y=(y',y_n)=(y_1,\cdots, y_{n-1}, y_n)$ be the boundary normal coordinates near $\partial\Omega$ such that $\partial\Omega$ and $\Omega$ are locally given as $\partial\Omega=\{y_n=0\}$ and
 $\Omega=\{y_n>0\}$, respectively. Then \eqref{equation of v} changes to
 \begin{equation}
 \left\{
 \begin{array}{ll}
 Pv=0\,&\,\text{\rm in $y_n>0$},\\
 v=\phi\,&\,\text{\rm on $y_n=0$},
 \end{array}
 \right.
 \end{equation}
 where $P=P_2+P_1$ with
 \begin{equation}
 \left\{
 \begin{array}{ll}
 P_2=D_{y_n}^2+\sum_{i,j=1}^{n-1} g^{ij} D_i D_j+\gamma^{-1}\tau^2,\\
 P_1=-i\gamma^{-1}\big(\partial_n\gamma D_n+\sum_{i,j=1}^{n-1} \partial_i(\gamma g^{ij})D_j\big),\\
 g=(g^{ij})=(\nabla_x y)(\nabla_x y)^\text{\it t},\,\,g^{nj}=0\,\,(j=1,\cdots, n-1),\,1\,\,(j=n),\\
 \partial_i=\partial_{y_i},\,\,D_j=-\sqrt{-1}\partial_j\,\,(j=1,\cdots, n),\\
 \text{\it t}\,\, \text{\rm denotes the transpose of matrices}
 \end{array}
 \right.
 \end{equation}
 and we have abused the original notations $v$ and $\phi$ to denote the corresponding ones in terms of the boundary normal coordinates. 
 
 We will look for $v$ in the form
 \begin{equation}
 v=v(y,\tau)=(\text{\rm Op}(a)\phi)(y)=(2\pi)^{-n+1}\int_{\mathbb{R}^2}
 e^{\sqrt{-1}y'\cdot\eta'}a(y,\eta',\tau)\widehat{\phi}(\eta')\,d\eta',
 \end{equation}
 where $y=(y',y_n)$, $\eta'=(\eta_1,\cdots,\eta_{n-1})$ and $\widehat\phi(\eta')=\int_{\mathbb{R}^{n-1}}
 e^{-\sqrt{-1}y'\cdot\eta'}\phi(y')\,dy'$. Here $\text{\rm Op}(a)$ is a pseudodifferential operator with a large parameter $\tau$ depending smoothly on $y_n\ge0$. More precisely, the symbol $a=a(y,\eta',\tau)\in S(0)$ of $\text{\rm Op}(a)$ is defined as follows. 
 \par
 For $m\in\mathbb{R}$, $a(y,\eta',\tau)\in S(m)$ if and only if it satisfies the following conditions (i) and (ii).
 \begin{itemize}
 	\item[(i)]
 	\begin{equation*}\,\, a(y,\eta',\tau)\in C^\infty(\mathbb{R}_{y'}^{n-1}\times\overline{\mathbb{R}_{y_n}^+}\times\mathbb{R}_{\eta'}^{n-1}\times\{\tau\ge1\}),\\
 	\end{equation*}
 	\item[(ii)]
 	\begin{equation*}
 	\begin{array}{ll}\,\,\text{For any}\,\alpha,\,\beta\in\mathbb{Z}_+\,\text{with}\,\mathbb{Z}_+:=\mathbb{N}\cup\{0\},\,\,\text{there exists a constant}\,\,
 	C_{\alpha,\beta,k}>0\,\,\text{such that}\\
 	|D_{y'}^\alpha D_{\eta'}^\beta a(y,\eta',\tau)|\le C_{\alpha,\beta,k} \langle\eta';\tau\rangle^{m+k-|\beta|}\\
 	\qquad\text{for}\,\,(y',y_n,\eta',\tau)\in \mathbb{R}_{y'}^{n-1}\times\overline{\mathbb{R}_{y_n}^+}\times\mathbb{R}_{\eta'}^{n-1}\times\{\tau\ge1\},
 	\end{array}
 	\end{equation*}
 	where $\langle\eta';\tau\rangle^2=1+|\eta'|^2+\tau^2$, $\mathbb{R}_{y'}^{n-1}=\{y'\in\mathbb{R}^{n-1}\}$,
 	$\mathbb{R}_{y_n}^+=\{y_n\in\mathbb{R}: y_n>0\}$.
 \end{itemize}
 
 \noindent
 Denote $S[m]=\{\text{Op}(a): a\in S(m)\}$, and call $m$ the orders of $a$ and $\text{Op}(a)$.
 
 Consider $P=P(y,D_y,\tau)$ not only as a differential operators with respect to $y_3$, but also consider it as a pseudodifferential operator with respect to $y'$ with large parameter $\tau$. By the composition formula of pseudodifferential operators,
 \begin{equation}\label{composition}
 P(\text{Op}(a)\phi)(y)=(2\pi)^{-n+1}\int_{{\mathbb R}^2} e^{\sqrt{-1}y'\cdot\eta'}\mathcal{A}(y,\eta',D_n),\tau)\widehat\phi(\eta')\,d\eta'
 \end{equation}
 with
 \begin{equation}\label{composition symbol}
 \begin{array}{ll}
 \mathcal{A}(y,\eta',\tau)=\sum_{\alpha\in{\mathbb{Z}_+^{n-1}}}(\alpha!)^{-1}\partial_{\eta'}^\alpha p(y,\eta',D_3,\tau)D_{y'}^\alpha a(y,\eta',\tau),\\
 \\
 a\sim\sum_{\ell=1}^\infty a_{-\ell}\,\,\text{with each}\,\, a_{-\ell}\in S(-\ell),
 \end{array}
 \end{equation}
 where 
 $$\begin{array}{ll}
 \partial_{\eta'}=(\partial_{\eta_1},\cdots,\partial_{\eta_{n-1}}),\,\,D_y'=(D_{y_1},\dots,D_{y_{n-1}}),\\ p(y,\eta',D_n,\tau)=P(y,D_{y'}, D_n,\tau)\big|_{D_{y'}=\eta'}.
 \end{array}
 $$
 
 Now expand the coefficients of $p(y,\eta',D_n,\tau)$ into their Taylor series around $y_n=0$ and introduce the following concept of order.
 
 \medskip\noindent
 \begin{definition}
  We regard the multiplications by $\eta_1,\cdots,\eta_{n-1},\,\tau$ as operators of order 1, the multiplication by $y_n$ as a operator of order $-1$ and $D_{y'}$ as an operator of order $0$.
 \end{definition}
 
 \medskip\noindent
 Then, considering this new concept of order equivalently to the order of symbols, we have the following asymptotic expansion of $\mathcal{A}$ in descending order
 \begin{equation}
 \mathcal{A}\sim\sum_{\ell=0}^\infty\mathcal{A}_{-\ell}\,\,\text{with order of each $\mathcal{A}_{-\ell}=-\ell$}.
 \end{equation}
 For example $\mathcal{A}_0$ and $\mathcal{A}_{-1}$ are given as 
 \begin{equation}
 \begin{array}{ll}
 \mathcal{A}_0=P_{2,0}^{(0)} a_0,\\
 \mathcal{A}_{-1}=p_{2,0}^{(0)} a_{-1}+\sum_{j+|\alpha|=1}y_n^j P_{2,j}^{(\alpha)}D_{y'}^\alpha a_0+P_{1,0}^{(0)} a_0,
 \end{array}
 \end{equation}
 where likewise $p(y,\eta',D_n,\tau)$ we have denoted
 each $p_j(y,\eta',D_n,\tau),\,\,j=1,2$ by $$p_j(y,\eta',D_n,\tau)=P_k(y,D_{y'},D_n,\tau)\big|_{D_{y'}=\eta'},
 $$ and we have also introduced the notation $p_{j,k}^{(\alpha)}=\partial_{y_n}^k\partial_{\eta'}^\alpha p_j(y',0,\eta',D_n,\tau)$. To have $P(\text{Op}(a)\phi)(y)=0$, we set
 the conditions 
 \begin{equation}\label{eq mathcal A}
 \mathcal{A}_{-\ell}=0,\,\,\ell\in\mathbb{Z}_+.
 \end{equation}
 Also, to have $v\big|_{y_n=0}=\phi$, we impose $a_{-\ell}$'s to satisfy
 \begin{equation}\label{bc for a}
 a_{-\ell}\big|_{y_n=0}=1\,\,(\ell=0),\,\,0\,\,(\ell\ge 1).
 \end{equation}
 By further imposing that each $a_{-\ell}$ satisfies $a_\ell\rightarrow 0\,\,(y_n\rightarrow\infty)$, we can uniquely solve the equations of system \eqref{eq mathcal A}, \eqref{bc for a}. For example, by $p_{2,0}^{(0)}=D_n^2+\lambda^2(y',\eta',\tau)$ with $\lambda=\lambda(y',\eta',\tau)$ given by $$\lambda=\sqrt{\sum_{i,j=1}^{n-1} g^{ij}(y',0)\eta_i\eta_j+\gamma^{-1}(y',0)\tau^2},
 $$ we have
 \begin{equation}
 \begin{array}{ll}
 a_0=e^{-\lambda y_n}\in S(0),\\
 a_{-1}=\big(\sum_{j=1}^{n-1} f_j(y',\eta',\tau) y_n^j\big) e^{-\lambda y_n}
 \end{array}
 \end{equation}
 with each $f_j=f_j(y',\eta',\tau)\in S(j-1)$ given by 
 \begin{equation}
 \left\{\begin{array}{ll}
 f_1=(2\lambda)^{-1}e_0-(4\lambda^2)^{-1}e_1,\\
 f_2=-(4\lambda)^{-1}e_1,
 \end{array}
 \right.
 \end{equation}
 where
\begin{equation}
 \left\{
 \begin{array}{ll}
 e_0=\sqrt{-1}[\gamma^{-1}\{\lambda\partial_n\gamma+\sum_{i,j=1}^{n-1}\partial_i(\gamma g^{ij})\eta_j\}]\big|_{y_n=0},\\
 e_1=[-\{\sum_{i,j=1}^{n-1}(\partial_n g^{ij})\eta_i \eta_j+(\partial_n\gamma^{-1})\tau^2\}+2\sum_{i=1}^{n-1}\{D_i\lambda\big(\sum_{j=1}^{n-1}g^{ij}\eta_j\big)\}]
 \big|_{y_3=0}.
 \end{array}
 \right.
 \end{equation}
 
 Observe that $\langle\eta';\tau\rangle\le \langle\eta'\rangle\tau\,\,(\tau\ge2)$ with $\langle \eta'\rangle=\sqrt{|\eta'|^2+1}$\,, $\langle\eta';\tau\rangle^{-1}\le \tau^{-1}$. Then, since each $a_{-\ell}\in S(-\ell)$ and we can assume $\phi\in C_0^\infty({\mathbb{R}}^{n-1})$ due to the fact that $\phi$ is supported in a local coordinates neighborhood of $\partial\Omega$, we have the following asymptotic expansion of $v(y,\tau)$ in
 descending estimate with respect to $\tau$
 \begin{equation}
 v(y,\tau)\sim\sum_{\ell=0}^\infty v_{-\ell}(y,\tau)   
 \end{equation}
 and each $v_{-\ell}(y,\tau)$ satisfies the estimate: for any $k\in\mathbb{Z}_+$, $\alpha\in\mathbb{Z}_+^{n-1}$, there exists a constant $C_{k,\alpha}>0$ such that
 \begin{equation}
 \big|D_{y_n}^k D_{y'}^\alpha v_{-\ell}(y,\tau)\big|\le C_{k,\alpha}\tau^{-\ell+k},\,\,\,(y',y_n)\in \mathbb{R}_{y'}^{n-1}\times\overline{\mathbb{R}_{y_n}^+}.   
 \end{equation}
 Further, for each $N\in\mathbb{Z}_+$, $v^{(N)}(y,\tau)=\sum_{\ell=0}^N v_{-\ell}(y,\tau)$ satisfies
 \begin{equation}
 \left\{
 \begin{array}{ll}
 Pv^{(N)}=r^{(N)}\in C^\infty(\mathbb{R}_{y'}^{n-1}\times\overline{\mathbb{R}_{y_n}^+}),\\
 v\big|_{y_n=0}=\phi,
 \end{array}
 \right.
 \end{equation}
 with the estimate:
 for any $k\in\mathbb{Z}_+$, $\alpha\in\mathbb{Z}_+^{n-1}$, there exists a constant $C_{k,\alpha}>0$ such that
 \begin{equation}
 \big|D_{y_n}^k D_{y'}^\alpha r^{(N)}(y,\tau)\big|\le C_{k,\alpha}\tau^{-N-1+k},\,\,\,(y',y_n)\in \mathbb{R}_{y'}^{n-1}\times\overline{\mathbb{R}_{y_n}^+}.
 \end{equation}
 
 Now by taking $N$ large enough, we denote by $v_N$ the approximate solution of \eqref{equation of v} near $\partial\Omega$ obtained by the following procedures. That is cutting off each of this local approximate solution $v^{(N)}$ attached to the local coordinates neighbourhood of $\partial\Omega$ and further pulling it back to the original coordinates, and then patching these local approximate solutions by a partition of unity. Then since each $v^{(N)}$ decays exponentially as $\tau\rightarrow\infty$ away from $y_n=0$, we have
 \begin{equation}
 \left\{
 \begin{array}{ll}
 \tau^2 v_N-\nabla\cdot(\gamma\nabla v_N)=r_N\,\,&\text{in}\,\,\Omega,\\
 v_N=\phi\,\,&\text{on}\,\,\partial\Omega
 \end{array}
 \right.
 \end{equation}
 with the estimate:
 for any $k\in\mathbb{Z}_+$, $\alpha\in\mathbb{Z}_+^{n-1}$, there exists a constant $C_{k,\alpha}>0$ such that
 \begin{equation}
 \big|D_{y_3}^k D_{y'}^\alpha r_N\big|\le C_{k,\alpha}\tau^{-N-1+k}\,\, \text{on}\,\,\overline\Omega.
 \end{equation}
 Then, the representation $v=v_N+w_N$ of $v$  with $w_N$ solving
 \begin{equation}\label{well-posedness for w}
 \left\{
 \begin{array}{ll}
 \tau^2 w_N-\nabla\cdot(\gamma\nabla w_N)=-r_N\,\,&\text{in}\,\,\Omega,\\
 w_N=0\,\,&\text{on}\,\,\partial\Omega
 \end{array}
 \right.
 \end{equation}
 gives a good asymptotic behaviour $v$ as
 $\tau\rightarrow\infty$ by the well-posedness of \eqref{well-posedness for w}.
Therefore we have achieved our task. 
 \begin{remark}${}$
 \par
 The argument deriving the asymptotic property of $v(\tau)$ using the theory of pseudodifferential operators is a constructive argument giving the dominant part of $v(\tau)$. It can be applied to  the case $|\tau|\rightarrow\infty$ in a subdomain $S^i$ of a sector containing $\{\tau\in\mathbb{R}: \tau\ge 1\}$ and it can also show that $v(\tau)$ is analytic with respect to $\tau\in S^i$. The precise definition of $S^i$ is given in Subsection \ref{subsection C}. Important necessary changes are as follows. In the definition of $S(m)$, $\tau$ in $a=a(y,\eta',\tau)$ must be considered in $S$ and $a$ is analytic with respect to $\tau\in S^i$. All other necessary changes can be easily figured out from the contexts.
 \end{remark}
 
 \section{}\label{subsection C}
 \setcounter{equation}{0}
 \renewcommand{\theequation}{C.\arabic{equation}}
 We will give here the analyticity of $\widehat{u_1^f}$, $\widehat{u_1^g}$ with respect to $\tau$ in an unbounded subdomain of a sector containing $\{\tau\in\mathbb{R}: \tau\ge 1\}$ and a useful lemma to derive the integral identity \eqref{integral identity}. To begin with let us recall $S$ and $S^i$. They were defined as $S:=\{\tau=\tau_R+\sqrt{-1}\,\tau_I\in\mathbb{C}: \tau_R,\,\tau_I\in\mathbb{R},\,
 \tau_R\ge1,\,|\tau_I|<\kappa|\tau_R|\}$ with a fixed small $\kappa>0$ and $S^i=\text{interior of $S$}$. Then we first show the analyticity of $\widehat{u_1^f}$, $\widehat{u_1^g}$ which of course enough to show
 it for the solution of \eqref{equation of v}. Let $\text{Tr}^{-1}: H^{m+1/2}(\partial\Omega)\rightarrow H^{m+1}(\Omega)$ with $m\in\mathbb{Z}_{+}$ be the inverse trace operator. Here note that $m\in\mathbb{Z}_+$ can be taken arbitrarily. Represent the solution $v=v(\tau)\in H^1(\Omega)$ of \eqref{equation of v} as $v=\widetilde v+V$ with $V=\text{Tr}^{-1}\phi\in H^1(\Omega)$. Then $\widetilde v$ has to satisfy
 \begin{equation}\label{equation for tilde v}
 \left\{
 \begin{array}{ll}
 (\tau^2-L_\gamma)\widetilde v=-(\tau^2-L_\gamma)V\,\,&\text{in}\,\,\Omega,\\
 \widetilde v=0\,\,&\text{on}\,\,\partial\Omega,
 \end{array}
 \right.
 \end{equation}
 where $L_\gamma\bullet=\nabla\cdot(\gamma\nabla\bullet)$. 
 
 Now let $G_\gamma$ be the Green function of $-L_\gamma$ with homogeneous Dirichlet boundary condition at $\partial\Omega$, which is an isomorphism from $H^{-1}(\Omega):=H_0^1(\Omega)^*$
 to $H_0^1(\Omega)$. Further it is a positive and compact operator on $L^2(\Omega)$. Hence there exist positive constants $\ell_0,\,m_0$ such that $\ell_0 I\le G_\gamma\le m_0 I$ with the identity operator $I$ on $L^2(\Omega)$. Then by $\tau_R^2-\tau_I^2>(1-\kappa^2)\tau_R^2$
 and $2|\tau_R\tau_I|<2\kappa\tau_r^2$, hence the operator $H_\gamma(\tau):=(\tau_R^2-\tau_I^2)G_\gamma+I$ is a positive operator on $L^2(\Omega)$ with the estimate:
 \begin{equation}
 H_\gamma(\tau)\ge \{(1-\kappa^2)\tau_R^2\ell_0+1\}I
 \end{equation}
 if $\kappa$ is small enough. Then \eqref{equation for tilde v} reduces to
 \begin{equation}\label{integral equation for tilde v}
 \big(I+2\sqrt{-1}\tau_R\tau_I H_\gamma(\tau)^{-1}G_\gamma\big)\widetilde v=-H_\gamma(\tau)^{-1}(\tau^2-L_\gamma)V.
 \end{equation}
 Since the operator norm of the operator $2\sqrt{-1}\tau_R\tau_\eta H_\gamma(\tau)^{-1}G_\gamma$ on $L^2(\Omega)$ is estimated from above by
 $2(m_0\kappa\tau_R^2)/\{(1-\kappa^2)\ell_0\tau_R^2+1\}$ which is smaller than $1$ for any $\tau\in S^i$ by taking $\kappa$ small enough. Hence we have a unique solution $\widetilde v(\tau)\in L^2(\Omega)$ of \eqref{equation for tilde v} for any $\tau\in S^i$ by the Neumann series of 
 $\big(I+2\sqrt{-1}\tau_R\tau_I H_\gamma(\tau)^{-1}G_\gamma\big)^{-1}$. From
 \eqref{integral equation for tilde v} we immediately have $\widetilde v(\tau)\in H_0^1(\Omega)$ for any $\tau\in S^i$ and its continuous dependency on $\tau\in S^i$. We remark here that we can also have the continuous dependency of $\widetilde v(\tau)$ on any inhomogeneous term in $H^{-1}(\Omega)$.
 
 To see the differentiability at $\tau\in S^i$, take $0\not=\sigma\in\mathbb{C}$ small enough and consider $r(\sigma):=(\widetilde v(\sigma+\tau)-\widetilde v(\tau))-\sigma\widetilde w(\tau)$, where $\widetilde w(\tau)\in H^1(\Omega)$ be the solution to
 \begin{equation}\label{equation for tilde w}
 \left\{
 \begin{array}{ll}
 (\tau^2-L_\gamma)\widetilde w(\tau)=-2\tau\widetilde v(\tau)-2\tau V\,\,&\text{in}\,\,\Omega,\\
 \widetilde w(\tau)=0\,\,&\text{on}\,\,\partial\Omega
 \end{array}
 \right.
 \end{equation}
 which can be obtained likewise $\widetilde v(\tau)$. Then by a direct computation,
 we have
 $$
 \left\{
 \begin{array}{ll}
 (\tau-L_\gamma)r(\sigma)=-\sigma^2(\widetilde v(\sigma+\tau)+V)-2\sigma\tau(\widetilde v(\sigma+\tau)-\widetilde v(\tau))\,\,&\text{in}\,\,\Omega,\\
 r(\sigma)=0\,\,&\text{on}\,\,\partial\Omega.
 \end{array}
 \right.
 $$
 Hence by the continuous dependency of $\widetilde v(\tau)$ on $\tau$ and also on
 any inhomogeneous term of its equation, we have $\sigma^{-1}r(\sigma)\rightarrow 0$ as $\sigma\rightarrow0$. Hence $\widetilde v(\tau)$ is differentiable at $\tau\in S^i$ and its derivative is $\widetilde w(\tau)$.
 
 Next we will formulate and prove a useful lemma for deriving \eqref{integral identity}.
 
 \begin{lemma}\label{Useful lemma}
 	Let $p(\tau),\,q(\tau)$ be analytic functions in $S^i$. We assume that these functions satisfy the following two conditions;
 	\begin{itemize}
 		\item [{\rm (i)}] $f(\tau)+g(\tau)=0$ in $S^i$.
 		\item [{\rm (ii)}] $g(\tau)=o(1)f(\tau)$ as $|\tau|\rightarrow\infty$.
 	\end{itemize}
 	Then $f(\tau)=0$ in $S^i$.
 \end{lemma}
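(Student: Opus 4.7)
The plan is to combine the two hypotheses to force $f$ to vanish on an unbounded open subset of $S^i$, and then invoke the identity theorem for analytic functions on the connected open set $S^i$ to conclude $f\equiv0$.

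First, I would substitute condition (i) into condition (ii). From (i) we have $g(\tau)=-f(\tau)$ throughout $S^i$. Plugging this into (ii) gives $-f(\tau)=o(1)f(\tau)$ as $|\tau|\to\infty$ in $S^i$. Concretely, for any $\varepsilon\in(0,1)$ there exists $R=R(\varepsilon)\ge1$ such that for every $\tau\in S^i$ with $|\tau|\ge R$ one has $|f(\tau)|=|g(\tau)|\le\varepsilon|f(\tau)|$, hence $(1-\varepsilon)|f(\tau)|\le0$, which forces $f(\tau)=0$. Thus $f$ vanishes identically on the set $S^i\cap\{|\tau|\ge R\}$.

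Next, I note that $S^i$ is an open connected subset of $\mathbb{C}$ (it is the interior of a truncated sector, hence path-connected), and the set $S^i\cap\{|\tau|>R\}$ is a nonempty open subset of $S^i$. Since $f$ is assumed analytic on $S^i$ and vanishes on this open subset, the identity theorem for holomorphic functions on a connected domain yields $f\equiv0$ on $S^i$, as required.

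The argument is essentially a one-line consequence of the identity theorem once the two hypotheses are combined; there is no serious obstacle. The only thing to verify carefully is that $S^i$ really is connected (which is clear from its definition as the interior of the sector $S=\{\tau_R+\sqrt{-1}\tau_I:\tau_R\ge1,\,|\tau_I|<\kappa\tau_R\}$) and that the asymptotic hypothesis (ii) is strong enough to give pointwise vanishing at large $|\tau|$ rather than just vanishing in some averaged or limiting sense. The uniform bound $|g(\tau)|\le\varepsilon|f(\tau)|$ for $|\tau|$ large, which is exactly what the $o(1)$ notation encodes, provides this.
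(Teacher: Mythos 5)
Your proof is correct, and it takes a genuinely different and more elementary route than the paper's. The paper argues by contradiction: assuming $f\not\equiv0$, it covers the far part of $S^i$ by annular pieces $D_n$, uses analyticity to pick points $\tau_n\in D_n$ with $f(\tau_n)\neq0$, chooses a small domain $\mathcal{E}$ on whose boundary $|g|<|f|$, and applies Rouch\'e's theorem to equate the number of zeros of $f$ and of $f+g$ in $\mathcal{E}$; since $f+g\equiv0$ has infinitely many zeros there, $f\equiv0$ in $\mathcal{E}$ and hence in $S^i$, a contradiction. You instead substitute (i) into (ii) directly: $g=-f$ forces $(1-\varepsilon)|f(\tau)|\le0$ for $|\tau|\ge R(\varepsilon)$, so $f$ vanishes pointwise on the unbounded open set $S^i\cap\{|\tau|>R\}$, and the identity theorem on the connected (indeed convex) domain $S^i$ finishes the argument. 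Both proofs rest on the same uniform reading of the $o(1)$ hypothesis and both invoke analytic continuation at the end, but your version dispenses with Rouch\'e's theorem and the zero-counting apparatus entirely; in fact your observation shows that the mere existence of a single large $\tau_n$ with $f(\tau_n)\neq0$ already contradicts (i) and (ii) together, so the paper's machinery is heavier than needed. (Minor point: the lemma's statement introduces $p(\tau),q(\tau)$ but then speaks of $f,g$; like the paper's own proof, you correctly work with $f$ and $g$.)
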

 \begin{proof} Suppose $f(\tau)\not\equiv0$ in $S^i$. Let $R:=\{\tau=\tau_R+\sqrt{-1}\tau_I\in\mathbb{C}: \tau_R,\,\tau_I\in\mathbb{R}, \tau_R\ge2,\,|\tau_I|\le |\tau_R|/2 \}$. For $n\in\mathbb{N}$,
 	denote $D_n=R\cap\{\tau\in\mathbb{C}: n+2\le|\tau|\le n+3\}$, and $Z(h;\mathcal{D}):=\{\tau\in \mathcal{D}: h(\tau)=0\}$ for any function $h=h(\tau)$ defined on a set $\mathcal{D}$. Then, since $f(\tau)$ is analytic in $S^i$, $\sharp(Z(f;D_n))<\infty$ for each $n\in\mathbb{N}$, where $\sharp(E)$ for a set $E$ denotes the number of elements in
 	$E$. Hence we have a sequence $\tau_n,\,n\in\mathbb{N}$ such that each $\tau_n\in D_n^i$, $f(\tau_n)\not=0$. Then $\tau_n\rightarrow\infty$ as $n\rightarrow\infty$, and hence we have $|g(\tau_n)|<|f(\tau_n)|$ for large enough $n$ by the assumptions on $f(\tau),\,g(\tau)$.
 	Fix such an $n\in\mathbb{N}$ and consider a small simply connected domain $\mathcal{E}\subset D_n^i$ such that $\tau_n\in\partial\mathcal{E}$ and
 	$|g(\tau)|<|f(\tau)|,\,\tau\in\partial\mathcal{E}$, where $\partial\mathcal{E}$ denotes the boundary of $\mathcal{E}$. Then by the Rouche theorem, we have
 	$\sharp(Z(f+g;\mathcal{E}))=\sharp(Z(f;\mathcal{E}))$. But by the assumption (i), we have
 	$\sharp(Z(f+g;\mathcal{E}))=\infty$ which implies that $f\equiv0$ in $\mathcal{E}$
 	and hence $f\equiv0$ in $S^i$ by the analyticity of $f$ in $S^i$. This contradict to the assumption $f(\tau)\not\equiv0$ in $S^i$ and hence we have the conclusion of the lemma.
 \end{proof}
 
 \section{}
 \setcounter{equation}{0}
 \renewcommand{\theequation}{D.\arabic{equation}}
 We give here the estimates of $\widehat I_j,\,j=1,2,3,4$. Since the arguments are almost the same for estimating all of them, we only give it for $\widehat I_2$. Recall the form of $I_2$. It was given as
 \begin{equation}\label{form of I_2}
 \begin{array}{ll}
 I_{2}=-4\sum_{j=1}^{n}B^{-1/2}\int\limits_{0}^{s}\sin\left\{(s-\sigma)B^{1/2}\right\}\\
 \qquad\qquad\qquad\quad\partial_{j}\Bigg[\sum_{k,l=1}^{n}c_{kl}^{j}(x)\Big\{(\partial_{k}u_{1}^{f})'(\sigma,x)\partial_{l}u_{1}^{g}(s-\sigma,x)\\
 \qquad\qquad\qquad\qquad\qquad\qquad\quad+\partial_{k}u_{1}^{g}(s-\sigma,x)(\partial_{l}u_{1}^{f})'(\sigma,x)\Big\}\Bigg]d\sigma\Bigg) dt.
 \end{array}
 \end{equation}
 
 Since the argument is the same for the other term, we will only analyze the Laplace transform $\widehat{I_2'}(\tau)$ of
 $$I_2':=-4\sum_{j=1}^{n}B^{-1/2}\int\limits_{0}^{s}\sin\left\{(s-\sigma)B^{1/2}\right\}\partial_j\Bigg[\sum_{k,l=1}^{n}c_{kl}^j(x)\partial_{k}u_{1}^{g}(s-\sigma,x)(\partial_{l}u_{1}^{f})'(\sigma,x)\Bigg]d\sigma\Bigg) dt.
 $$
 By \eqref{derivation of dominant part of I_2}, we have $\widehat{I_2'}(\tau)=\widehat{\mathcal{I}_2'}(\tau)+\widehat{\mathcal{R}_2'}(\tau)$,
 $$
 \widehat{\mathcal{I}_2'}(\tau)=-4\sum_{j=1}^{n}\partial_{j}\left[c_{kl}^{j}(x)\left(\widehat{F_{k}}\,\,\widehat{(\partial_{l}u_{1}^{f})'}\right)(\tau,x)\right]
 $$
 with $F_{k}(s,x)=s\partial_{k}u_{1}^{g}(s,x)$
 and $\widehat{\mathcal{R}_2'}(\tau)$ is the Laplace transform of $R_2'$ given by
 \begin{equation}
 R_2';=\int_0^s\Big[(s-\sigma)^2\int_0^\infty \lambda^{1/2}\lb\int_0^1 (1-\theta)\sin\left\{(s-\sigma)\lambda^{1/2}\right\}\,d\theta\rb dE(\lambda) G_2'(s,\sigma,x)\Big]\,d\sigma, 
 \end{equation}
 where
 $$
 G_2'(s,\sigma)=G_2'(s,\sigma,x):=4\sum_{j=1}^n\partial_j\left[\sum_{k,l=1}^{n}c_{kl}^j(x)\partial_{k}u_{1}^{g}(s-\sigma,x)(\partial_{l}u_{1}^{f})'(\sigma,x)\right].
 $$
 We will show that $\widehat{\mathcal{I}_2'}(\tau)$ is the dominant part of $\widehat{I_2'}(\tau)$. That is $\widehat{\mathcal{R}_2'}(\tau)$ is relatively smaller than this
 dominant part by $o(1)$ as $\tau\rightarrow\infty$.
 We begin by first estimating $H_2'(\sigma,s-\sigma)$ defined by
 $$
 H_2'(\sigma,s-\sigma)=\int_0^\infty \lambda^{1/2}\lb \int_0^1 (1-\theta)\sin\left\{(s-\sigma)\lambda^{1/2}\right\}\,d\theta\rb dE(\lambda) G_2'(s,\sigma,x).
 $$
 Then since each $c_{kl}^j\in C_0^\infty(\Omega)$, we have
 \begin{equation}\label{spectral estimate}
 \Vert H_2'(\sigma,s-\sigma)\Vert_{L^2(\Omega)}^2\le\int_0^\infty \D\lambda\, \lVert E(\lambda)G_2'(s,\sigma)\rVert_{L^2(\Omega)}^2=\lVert G_2'(s,\sigma)\rVert_{H^1(\Omega)}^2.
 \end{equation}
Here note that $\lVert G_2'(s,\sigma)\rVert_{H^1(\Omega)}=O(|\tau|^{5-2\mu)}$ as $S^i\ni\tau\rightarrow\infty$, where $S^i$ is the interior of an unbounded domain $S$ of a sector containing $\{\tau\in\mathbb{R}: \tau\ge1\}$ defined in Subsection \ref{subsection C}. This is coming from the asymptotic properties of $\widehat{u_1^f}(\tau),\,\widehat{u_1^g}(\tau)$ as
 $S^1\ni\tau\rightarrow\infty$ which we gave in Appendix B, and $\mu$ describes the decay of $\widehat\chi(\tau)=\tau^{-\mu}(1+O(\tau^{-1})$. 
 
 Now observe that 
 $$
 J(\tau):=\int_0^\infty e^{-\tau s}\left(\int_0^s (s-\sigma)^2 H_2'(\sigma,s-\sigma)\,d\sigma\right)\,ds=\int\limits_0^\infty e^{-\tau\sigma}\left(\int_0^\infty e^{-\tau s}s^2 H_2'(\sigma,s)\,ds\right)\,d\sigma.
 $$
 Here $H_2'(\sigma,s),\,s,\sigma\in [0,\infty)$ is an $L^2(\Omega)$ valued bounded measurable function and note that $\tau\in S^i$. Since
 there exists some $0<\delta<1$ such that  $s^2\,e^{-\tau_R\, s}\le \tau_R^{-2} e^{-\delta\tau_R\, s}$ for any real part $\tau_R$ of $\tau\in S^i$ and $|\tau|,\,\tau_R$ are equivalent for $\tau\in S^i$, we have by the Riemann-Lebesgue theorem,
 $$
 \Vert J(\tau)\Vert_{L^2(\Omega)}=o(|\tau|^{3-2\mu}),\,\,S^i\ni\tau\rightarrow\infty.
 $$
 On the other hand, it is easy to see that
 $$
 \Vert\widehat{\mathcal{I}_2'}(\tau)\Vert_{L^2(\Omega)}=O(|\tau|^{3-2\mu}),\,\,S^i\ni\tau\rightarrow
 \infty.
 $$
 Therefore $\widehat{\mathcal{I}_2'}(\tau)$ is the dominant.

\end{document}